\documentclass[11pt,reqno]{amsart}
\usepackage{amssymb,amsmath,amsthm,amsfonts,mathrsfs,graphicx}
\usepackage{mathtools}
\usepackage{enumerate,enumitem}
\usepackage[margin=2.5cm]{geometry}
\usepackage[colorlinks=true,linkcolor=blue]{hyperref}
\usepackage{tikz}
\usepackage{comment}

\title[Entropic solutions to the Euler-Poisson-alignment system]{Entropic solutions to the 1D pressureless Euler system with nonlocal interactions}

\author[Trevor M. Leslie]{Trevor M. Leslie}
\address[Trevor M. Leslie]{\newline Department of Applied Mathematics, Illinois Institute of Technology, Chicago IL 60616, USA}
\email{tleslie@illinoistech.edu}

\author[Changhui Tan]{Changhui Tan}
\address[Changhui Tan]{\newline Department of Mathematics, University of South Carolina, Columbia SC 29208, USA}
\email{tan@math.sc.edu}

\thanks{\textit{Acknowledgment.} 
	TL is partially supported by NSF grant DMS-2408585.  CT is partially supported by NSF grant DMS-2238219.
}

\makeatletter
\@namedef{subjclassname@2020}{\textup{2020} Mathematics Subject Classification}
\makeatother
\subjclass[2020]{35Q35, 35L65, 35D30, 35B30, 76N10, 82C22}

\keywords{Euler-Poisson-alignment system, entropic solution, scalar balance law, time-dependent flux, sticky particle dynamics}

\newtheorem{theorem}{Theorem}[section]
\newtheorem{lemma}[theorem]{Lemma}

\newtheorem{proposition}[theorem]{Proposition}

\theoremstyle{definition}
\newtheorem{definition}{Definition}[section]

\theoremstyle{remark}
\newtheorem{remark}{Remark}

\def\R{\mathbb{R}}
\def\dd{\mathrm{d}}
\def\pa{\partial}

\def\loc{\mathrm{loc}}
\def\F{\mathrm{F}}
\def\A{\mathsf{A}}
\def\tM{\widetilde{M}}

\newcommand{\sgn}[1]{\mathrm{sgn}(#1)}

\DeclareMathOperator{\supp}{supp}

\DeclareMathOperator{\Lip}{Lip}

\def\a{\alpha}

\def\s{\sigma}

\begin{document}
	\allowdisplaybreaks
	
	\begin{abstract}
		We study weak solutions of the one-dimensional pressureless Euler-Poisson-alignment system. When smooth solutions develop singularities, distributional weak solutions are not unique. We introduce an entropy-based selection principle via an associated scalar balance law with time-dependent flux and establish global well-posedness for its entropy solutions. The resulting entropic solution yields a uniquely selected weak solution of the Euler-Poisson-alignment system. In the attractive regime, it is compatible with sticky particle dynamics, while in the repulsive regime atomic states may disperse, revealing a fundamental qualitative difference between the two cases.
	\end{abstract}
	
	\maketitle 
	
	\section{Introduction}
	
	We consider the one-dimensional pressureless Euler system with nonlocal interaction forces:
	\begin{equation}\label{eq:PE}
		\begin{cases}
			\,\,\pa_t \rho +\pa_x(\rho u)=0, \qquad x\in\R, \quad t\geq0,\\
			\,\,\pa_t (\rho u) +\pa_x (\rho u^2) = \rho\, \F,
		\end{cases}
	\end{equation}
	subject to the initial data
	\[ \rho(x,0) = \rho^0(x), \quad u(x,0) = u^0(x).\]
	Here $\rho$ represents the density of a probability distribution, $u$ denotes the velocity, and $\F$ is the nonlocal interaction force. In collective dynamics, the interaction forces are often modeled within a \emph{three-zone} framework consisting of long-range attraction, short-range repulsion, and mid-range alignment.
	
	Attraction-repulsion effects are commonly modeled through an interaction potential
	\[
	\F^{AR} =-\pa_x\varphi,\quad \varphi(x,t) = \int_\R W(|x-y|)\rho(y,t)\dd y.
	\]
	A particularly important choice of $W$ is the Newtonian potential
	\[
	W(r) = -\kappa r.
	\]
	When $\kappa>0$, the interaction is repulsive, while $\kappa<0$ corresponds to attraction. In this case, the potential $\varphi$ satisfies the Poisson relation
	\[
	-\pa_x^2\varphi = 2\kappa \rho,
	\]
	and the system \eqref{eq:PE} is known as the \emph{Euler-Poisson equation}.
	
	The global well-posedness theory for smooth solutions of the Euler-Poisson system was established by Engelberg, Liu, and Tadmor \cite{engelberg2001critical}, who identified a \emph{critical threshold} phenomenon: subcritical initial data lead to global smooth solutions, while supercritical data give rise to finite-time singularity formation. This theory has been extended to higher dimensions \cite{tadmor2003critical} and in particular to radially symmetric settings \cite{wei2012critical,tan2021eulerian,carrillo2023existence}.  Pressure effects have also been incorporated in \cite{tadmor2008global}.
	
	Alignment effects are modeled by the nonlocal force
	\[
	\F^{Align} = \int_\R \phi(x-y)\big(u(y,t)-u(x,t)\big)\rho(y,t)\dd y,
	\]
	leading to the \emph{Euler-alignment system}. This system arises as the hydrodynamic limit of the Cucker-Smale particle dynamics \cite{cucker2007emergent} and serves as a macroscopic model for flocking behavior in biological swarms. The function $\phi$, known as the \emph{communication protocol}, quantifies the strength of pairwise alignment interactions and is typically assumed to be radially decreasing.
	
	Over the past decade, the Euler-alignment system has been studied extensively. For smooth solutions, critical threshold phenomena were established when $\phi$ is bounded \cite{tadmor2014critical,carrillo2016critical,he2017global}. Similar results hold for weakly singular but integrable communication kernels \cite{tan2020euler,leslie2020lagrangian}. A qualitatively different regime arises when $\phi$ is \emph{strongly singular}, i.e., non-integrable at the origin. In this case, the alignment force induces nonlocal dissipation and has a regularizing effect. In one dimension, global regularity for all smooth initial data away from vacuum has been shown in \cite{do2018global,kiselev2018global,shvydkoy2017eulerian,shvydkoy2017eulerian2,shvydkoy2018eulerian}, while rough initial data are instantaneously regularized \cite{danchin2019regular,leslie2018weak}.
	Further extensions include the incorporation of pressure effects \cite{choi2019global,chen2021global,tadmor2023swarming,bai2024global,bai2024global2} and the study of uni-directional flows \cite{LLST2020geometric,lear2021unidirectional,lear2022existence,li2024global}. We refer the reader to the surveys by Tadmor \cite{tadmor2021mathematics,tadmor2023swarming} and the monograph \cite{shvydkoy2021dynamics} and survey \cite{ShvydkoySurvey} by Shvydkoy for comprehensive overviews.
	
	In this work, we are interested in the regime where both attraction-repulsion and alignment forces are present. The resulting system is known as the \emph{Euler-Poisson-alignment (EPA) system}:
	\begin{equation}\label{eq:EPA}
		\begin{cases}
			\,\,\pa_t \rho +\pa_x(\rho u)=0,\\
			\,\,\pa_t (\rho u) +\pa_x (\rho u^2) = -\rho\pa_x\varphi + \displaystyle \int_\R \phi(x-y)(u(y)-u(x))\rho(x)\rho(y)\dd y,\\
			\,\, -\pa_x^2\varphi = 2\kappa\rho.
		\end{cases}
	\end{equation}
	The global well-posedness theory for smooth solutions of \eqref{eq:EPA} has been established in a number of works, including \cite{carrillo2016critical,kiselev2018global,bhatnagar2020critical2,bhatnagar2023critical,choi2025critical,luan2025euler}. In particular, when $\phi$ is bounded or weakly singular, solutions may develop singularities in finite time.
	
	Our objective is to continue solutions beyond the onset of singularities. This necessitates a theory of weak solutions. However, distributional weak solutions are not unique, and an additional \emph{selection principle} is required to single out a physically meaningful solution.
	
	As a brief aside, we note that in the context of measure-valued $\rho$ like we consider below, an explicit $\rho$-a.e. defined choice of $\partial_x \varphi$ is necessary to make sense of the equation.  The natural choice, on the basis of conservation of momentum and (as discussed below) BV calculus is 
	\begin{equation} 
		\label{e:EPAmeasure}
		\begin{cases}
			\,\,\pa_t \rho +\pa_x(\rho u)=0,\\
			\,\,\pa_t (\rho u) +\pa_x (\rho u^2) = \kappa (\mathrm{sgn}*\rho)\rho + \displaystyle \int_\R \phi(x-y)(u(y)-u(x))\rho(x)\rho(y)\dd y,
		\end{cases}
	\end{equation} 
	where $\sgn{x}$ is the signum function, defined to be equal to $1$ for positive $x$, $-1$ for negative $x$, and $0$ for $x = 0$.  Whenever we refer to the system \eqref{eq:EPA} below, the reader should assume we mean the specific (and canonical) manifestation \eqref{e:EPAmeasure}.
	
	In the absence of interaction forces, system \eqref{eq:EPA} reduces to the one-dimensional pressureless Euler equation. In this setting, several selection principles have been developed that lead to a unique solution compatible with \emph{sticky particle dynamics}; see, for instance, \cite{e1996generalized,brenier1998sticky,bouchut1998one}. These ideas have subsequently been extended to the Euler-Poisson equations \cite{nguyen2008pressureless,brenier2013sticky,carrillo2023equivalence,Tudorascuetal2025} and to the Euler-alignment system \cite{leslie2023sticky,leslie2024finite,galtung2025sticky}.  A more thorough overview on the sticky particle literature as it relates to alignment dynamics can be found in the introduction of \cite{leslie2023sticky}.
	
	In the present work, we develop a well-posedness theory for weak solutions of the EPA system \eqref{eq:EPA}. Our approach builds on the framework of Brenier and Grenier \cite{brenier1998sticky} for the pressureless Euler equations, and the extensions to the Euler-Poisson equations \cite{nguyen2008pressureless, NguyenTudorascu2015} and the Euler-alignment system \cite{leslie2023sticky}. The central idea is to associate \eqref{eq:EPA} with a scalar balance law
	\begin{equation}
		\label{eq:balancelaw}
		\pa_t M + \pa_x\big(\A(M,t)\big) = (\phi\ast M)\,\pa_x M,
	\end{equation}
	where $M$ denotes the cumulative distribution function (see \eqref{def:M}). The flux $\A$ depends on the initial data and is time-dependent. Its explicit form is given in \eqref{def:At}-\eqref{def:Agen}.
	
	We establish a global well-posedness theory for \eqref{eq:balancelaw} and show that entropy inequalities select a unique entropy solution. From this solution, we recover a corresponding weak solution of the EPA system \eqref{eq:EPA}, which we refer to as the \emph{entropic solution}. The entropy conditions imposed on $M$ serve as a selection principle that guarantees uniqueness.
	
	The scalar balance law \eqref{eq:balancelaw} exhibits two structural features that are central to the analysis: a genuinely nonlocal term induced by the alignment interaction, and an explicitly time-dependent flux arising from the Poisson force. While each of these features has been treated separately in the scalar equations associated with the Euler-alignment and Euler-Poisson systems, their coexistence in \eqref{eq:balancelaw} leads to new analytical difficulties. In particular, the entropy admissibility conditions and stability estimates available for the individual models do not directly extend to this setting and must be substantially refined.
	
	Our approach starts from the theory developed for the Euler-alignment system in \cite{leslie2023sticky} and incorporates the Poisson interaction through a time-dependent flux. We distinguish two scenarios depending on the sign of the Poisson force, characterized by different relationships between the entropic solution and corresponding agent-based sticky particle dynamics. 
	
	When the Poisson force is attractive ($\kappa<0$), we show that the entropic solution remains compatible with the sticky particle dynamics, extending results for the Euler-alignment system ($\kappa=0$) obtained in \cite{leslie2023sticky}. Verifying entropy admissibility in this setting presents additional technical challenges in the analysis needed to handle the time-dependent flux, see in particular the proof of Theorem \ref{thm:discreteattractive}.
	In contrast, when the Poisson force is repulsive ($\kappa>0$), the entropic solution is no longer compatible with sticky particle dynamics: atomic initial data may instantaneously disperse into absolutely continuous states under the effect of repulsion.
	\medskip

	The paper is organized as follows. 
	In Section~\ref{sec:balancelaw}, we derive the scalar balance law \eqref{eq:balancelaw} associated with the EPA system \eqref{eq:EPA} and introduce the corresponding entropy conditions. 
	In Section~\ref{sec:uniqueness}, we prove uniqueness and stability for entropy solutions of the scalar balance law \eqref{eq:balancelaw} with time-dependent flux.
	Section~\ref{sec:discbalance} is devoted to the analysis of the  agent-based sticky particle dynamics and their connection with the discretized scalar balance law: after defining the collision rules and establishing basic properties, we verify that the sticky particle solution resembles a weak solution of the discretized balance law and, in the attractive case $\kappa<0$, it recovers the entropy solution.
	In Section~\ref{sec:gendata}, we pass to the limit in the discrete entropy solutions to obtain existence for general initial data in the attractive regime. Finally, Section~\ref{sec:repulsive} discusses the repulsive case $\kappa>0$, where sticky particle dynamics cease to be compatible with the entropic solution.

	\section{The scalar balance law}\label{sec:balancelaw}
	In this section, we derive the scalar balance law \eqref{eq:balancelaw} associated with the Euler-Poisson-alignment system \eqref{eq:EPA} and introduce the corresponding notion of entropy solutions. The derivation is based on a reformulation of the EPA system in terms of transported quantities, which allows us to reduce the system to a single nonlocal scalar equation with a time-dependent flux. We then define an admissibility criterion via entropy inequalities and recall the associated Rankine-Hugoniot and Oleinik conditions, which will serve as the foundation for the well-posedness and selection theory developed in the subsequent sections.
	
	\subsection{Key quantities}
	We begin by deriving an alternative formulation of the EPA system that will be instrumental in obtaining the scalar balance law. Define the auxiliary quantity (introduced in~\cite{carrillo2016critical})
	\[
	e(x,t) = \pa_x u(x,t) + \phi*\rho(x,t).
	\]
	A direct calculation using \eqref{eq:EPA} gives
	\[
	\pa_te +\pa_x(eu) = 2\kappa\rho.
	\]
	From \eqref{eq:EPA}$_1$, we have
	\[
	\pa_t(t\rho) + \pa_x(t\rho u) = \rho.
	\]
	Combining the two equations above, we see that $e-2\kappa t\rho$ satisfies the continuity equation
	\[
	\pa_t(e-2\kappa t\rho) + \pa_x\big((e-2\kappa t\rho)u\big) = 0.
	\]
	Let $M$ denote the primitive of $\rho$, shifted to lie in $[-\tfrac12,\tfrac12]$:
	\begin{equation}\label{def:M}
		M(x,t) = - \frac12 + \int_{(-\infty,x]} \rho(y,t)\dd y.	
	\end{equation}
	Define a primitive $\psi$ of $e-2\kappa t \rho$ via 
	\begin{equation}\label{def:psi}
		\psi \triangleq u + \Phi\ast\rho - 2\kappa t M,\quad \text{where}\quad \Phi(x) \triangleq \int_0^x \phi(y)\dd y.
	\end{equation}
	Thanks to the shift by $-\frac12$ in \eqref{def:M}, the convolution $\Phi\ast\rho$ can be rewritten as $\phi\ast M$.  Indeed, if $\rho(t)$ is a compactly supported probability density with $\supp\rho(t)\subset[-R,R]$, then $M(\pm x,t)=\pm\tfrac12$ for $x\ge R$, and a simple integration-by-parts verifies $\Phi*\rho=\phi*M$ on $[-R,R]$.
	
	Expressed in terms of the pair $(\rho,\rho\psi)$, the EPA system \eqref{eq:EPA} becomes
	\begin{equation}\label{eq:EPA2}
		\begin{cases}
			\,\,\pa_t \rho + \pa_x (\rho u) = 0, \\
			\,\,\pa_t (\rho \psi) + \pa_x (\rho \psi u) = 0,
		\end{cases}  
	\end{equation}
	and the velocity can be recovered from $\psi$ and $M$ via the relation \eqref{def:psi}:
	\begin{equation}
		\label{eq:u}
		u = \psi - \phi\ast M + 2\kappa tM.
	\end{equation}
	
	\subsection{Derivation of the scalar balance law}
	We now give a formal derivation of the scalar balance law \eqref{eq:balancelaw} from the EPA system \eqref{eq:EPA2}-\eqref{eq:u}, assuming all functions involved are as regular as necessary. 
	
	Let $Q$ denote the following primitive of $\rho\psi$:
	\[
	Q(x,t) = \int_{(-\infty,x]} \rho \psi(y,t)\dd y.
	\]
	From \eqref{eq:EPA2}, we see that $M$ and $Q$ satisfy pure transport equations:
	\[
	\pa_t M + u\pa_x M  =  0, \qquad
	\pa_t Q + u\pa_x Q  =  0.
	\]
	
	Let $A$ be a map (depending only on $(M^0, Q^0)$) such that $Q^0 = A\circ M^0$. 
	Then, we have $Q(t) = A\circ M(t)$ for any $t\geq0$. Indeed, suppose $(x,t)$ lies on a characteristic path originating at $(x^0,0)$ and governed by the velocity field $u$. Then
	\[
	Q(x,t) = Q^0(x^0)=A(M^0(x^0))=A(M(x,t)).
	\]
	
	Together with the relation \eqref{eq:u}, we can write
	\begin{align*}
		u\pa_x M  & = \rho \psi - \rho \Phi*\rho +2\kappa t\rho M = \pa_x Q - \pa_x M\cdot (\phi\ast M) + \kappa t \pa_x(M^2)\\
		& = \pa_x \big(A(M)+\kappa tM^2\big) - \pa_x M \cdot (\phi\ast M).
	\end{align*}
	This leads to the scalar balance law \eqref{eq:balancelaw} with a time-dependent flux
	\begin{equation}\label{def:At}
		\A(m,t) \triangleq A(m) +\kappa t m^2 
	\end{equation} 
	We take $A(-\frac12) = 0$ and
	\begin{equation}
		\label{def:Agen}
		A(m) = \int_{-\frac12}^m \psi^0\circ X^0(m')\,\dd m',
		\qquad m\in (-\tfrac12, \tfrac12],
	\end{equation}
	where $X^0$ denotes the generalized inverse of $M^0$.  This $A$ has the desired property $Q^0 = A\circ M^0$ even when $\rho^0$ and $u^0$ are less regular.
	
	Note that when $\kappa=0$, the EPA system \eqref{eq:EPA} reduces to the Euler-alignment system. The theory for the corresponding scalar balance law \eqref{eq:balancelaw} has been established in \cite{leslie2023sticky}. The principal new difficulty here is the time-dependence of the flux $\A$.

	\subsection{Entropy conditions}
	We now introduce an admissibility criterion for \eqref{eq:balancelaw} via entropy inequalities, following the framework of \cite{leslie2023sticky} and adapting it to the time-dependent flux $\A(m,t)$.
	
	Let $\eta:[-\tfrac12,\tfrac12]\to \R$ be a convex and Lipschitz function. For this entropy $\eta$, we define the corresponding entropy flux $q:[-\tfrac12,\tfrac12]\times \R_+\to \R$ by
	\[
	\pa_m q(m,t) = \eta'(m)\, \pa_m \A(m,t).
	\]
	An entropy solution $M$ to \eqref{eq:balancelaw} is required to satisfy, in the sense of distributions,
	\begin{equation} \label{eq:entropy}
		\pa_t\big(\eta(M)\big) + \pa_x\big(q(M,t)\big) \le (\phi\ast M)\,\pa_x\big(\eta(M)\big).
	\end{equation}
	Equivalently, for any nonnegative test function $\zeta\in C_c^\infty(\R\times (0,T))$,
	\begin{equation} \label{eq:entropydist}
		\int_0^{T} \int_\R \Big[\eta(M) \pa_t \zeta + q(M,t)\pa_x \zeta 
		+ (\phi\ast M)\pa_x (\eta(M))\zeta \Big] \dd x \dd t \ge 0.
	\end{equation}
	The last term is well defined since $\eta(M)\in BV_{\loc}(\R)$ and is constant outside $[-R,R]$, while $\phi*M$ is bounded and continuous on $[-R,R]$. Consequently, $\pa_x(\eta(M))$ can be interpreted as a Radon measure, and the integral is understood in the usual sense (cf. \cite{leslie2023sticky} for details).
	
	\begin{definition}
			We say $M : \R \times [0,T] \to [-\frac12,\frac12]$ is an \emph{entropy solution} to the scalar balance law \eqref{eq:balancelaw} if
		\begin{itemize}
			\item the entropy inequality \eqref{eq:entropy} holds for every entropy pair $(\eta,q)$;
			\item $M(\cdot,t)$ is nondecreasing for every $t\in[0,T]$;
			\item there exists $R(T)>0$ such that $M(\pm x,t)=\pm\frac12$ for all $x\ge R(T)$ and $t\in[0,T]$.
		\end{itemize}
		We say $M : \R \times [0,+\infty) \to [-\frac12,\frac12]$ is an entropy solution if its restriction to any compact time interval $[0,T]$ is an entropy solution in the above sense.
	\end{definition}
	
	For purposes of establishing our existence and uniqueness results for general initial data, it suffices to consider the following Kruzkov entropy pairs:
	\begin{equation}
		\label{eq:Kruzkovpair}
		\eta(m) = |m - \alpha|, \qquad 
		q(m,t) = \sgn{m - \alpha}\big(\A(m,t) - \A(\alpha,t)\big), \qquad
		\forall~\alpha\in [-\tfrac12,\tfrac12].
	\end{equation}
	
	Next, we recall the Rankine-Hugoniot and Oleinik conditions associated with \eqref{eq:balancelaw}. The derivation is identical to that in \cite{leslie2023sticky}, with the only modification that the flux $\A$ depends on time; we therefore only sketch the result.
	
	Consider a shock curve $C = \{(x,t): x = s(t)\}$ with left and right states $M_\ell$ and $M_r$, and shock speed $\sigma(t) = \dot{s}(t)$. Applying the entropy inequality \eqref{eq:entropydist} to piecewise smooth solutions and passing to the limit yields
	\[
	(\sigma + \phi\ast M)[[\eta(M)]] \le [[q(M,t)]],\qquad \text{along }C,
	\]
	where $[[\cdot]]$ denotes the jump across the shock. Choosing $\eta = \mathrm{id}$ and $q = \A$ gives the Rankine-Hugoniot condition
	\begin{equation}
		\label{e:RH}
		\sigma +\phi\ast M = \frac{[[\A(M,t)]]}{[[M]]}.
	\end{equation}
	For $M_\ell<M_r$, taking $\eta(m) = (m-\theta) H(m-\theta)$ and 
	$q(m,t)=(\A(m,t)-\A(\theta,t))H(m-\theta)$ for $\theta\in(M_\ell, M_r)$ leads to the \emph{Oleinik entropy condition}
	\begin{equation}
		\label{eq:Oleinik}
		\sigma + \phi\ast M\le \frac{\A(\theta,t) - \A(M_\ell,t)}{\theta - M_\ell},
		\qquad \theta\in (M_\ell, M_r).
	\end{equation}
	The formulas \eqref{e:RH}-\eqref{eq:Oleinik} coincide with those in \cite{leslie2023sticky}, except for the explicit time-dependence of the flux $\A(\cdot,t)$.
	
	\begin{remark}
		\label{rem:entconds}
		Before moving on, we pause to describe the role of the conditions \eqref{eq:entropy}, \eqref{e:RH}, and \eqref{eq:Oleinik} in our framework.  
		In Section~\ref{sec:uniqueness}, we prove uniqueness and stability of solutions satisfying \eqref{eq:entropy} using the Kruzkov pairs \eqref{eq:Kruzkovpair}.  On the other hand, \eqref{e:RH} and \eqref{eq:Oleinik} are used in our proof of existence (for the attractive case $\kappa<0$), which is constructive.  First, in Section \ref{sec:discbalance}, we generate piecewise constant entropy solutions, and then in Section~\ref{sec:gendata} we obtain an entropy solution associated to general initial data by taking limits.  
		
		The discontinuities in our piecewise constant solutions evolve according to a system of ODEs, except at merging times, when they will obey `sticky particle' (i.e., completely inelastic) collision rules.  The system of ODEs enforces the Rankine-Hugoniot condition \eqref{e:RH} (which is emphatically not an entropy condition, but is by itself enough to verify that our piecewise constant solutions do in fact solve the scalar balance law in a weak sense).  In the attractive case $\kappa<0$, the sticky particle collision rules  enforce the Oleinik condition.
		The two conditions \eqref{e:RH} and \eqref{eq:Oleinik} together guarantee that \eqref{eq:entropy} holds for any entropy/entropy flux pair $(\eta, q)$.  
		
		A much wider family of collision rules can be used to generate piecewise constant weak solutions of the type we generate (even for the repulsive case $\kappa>0$). However, none of them are entropy solutions in the repulsive case, and in the attractive case, only the sticky particle collision rules are compatible with the entropy conditions.
	\end{remark}

	Once an entropy solution $M$ of \eqref{eq:balancelaw}  is obtained, we can uniquely recover an \emph{entropic solution} of the EPA system \eqref{eq:EPA} by setting
	\[
	\rho(x,t) = \pa_x M(x,t),\quad \rho(x,t) u(x,t) = \pa_x\big(\A(M(x,t),t)\big) - \pa_x M(x,t)\cdot(\phi\ast M)(x,t). 
	\]
	The recovery of the EPA system follows the procedure outlined in  \cite{leslie2023sticky}.  The only difference is the Poisson term, which emerges upon differentiating the BV function $M^2$.  Using the BV chain rule, we obtain $\partial_x(M^2) = 2\overline{M} \partial_x M$, where $\overline{M}(x) = \frac12 (M(x+) + M(x-)) = \frac12 \mathrm{sgn}*\rho(x)$.  We omit the remaining details and refer the interested reader to \cite{leslie2023sticky}.

	\section{Uniqueness and stability}
	\label{sec:uniqueness}
	
	In this section, we establish the uniqueness and stability of solutions of the scalar balance law \eqref{eq:balancelaw}, which we recall here for the reader's convenience:
	\begin{equation}\label{eq:M}
		\pa_t M + \pa_x(\A(M,t)) = (\phi\ast M)\pa_x M,\qquad M(x,0)=M^0(x).
	\end{equation}
	
	The theory of entropy solutions for scalar conservation laws is by now well established. Equation \eqref{eq:M}, however, presents two additional analytical challenges. First, the right-hand side contains a nonlocal term arising from the alignment interaction. Second, the flux $\A$ is explicitly time-dependent, reflecting the presence of the Poisson force. In the absence of the Poisson force, the corresponding scalar equation associated with the Euler-alignment system was studied extensively in \cite{leslie2023sticky}. The purpose of this section is to extend that theory to accommodate a time-dependent flux.
	
	To proceed, we first define the uniform-in-time Lipschitz bound
	\begin{equation}\label{def:AtLip}
		\|\A\|_{\Lip(T)} \triangleq \sup_{0\leq \tau\leq T} |\A(\cdot,\tau)|_{\Lip}.
	\end{equation}
	The following theorem is stated for a general time-dependent flux $\A$.
	\begin{theorem}
		\label{thm:M}
		Consider the scalar balance law \eqref{eq:M}.   Assume the initial condition $M^0$ is a nondecreasing function and that there exists an $R^0>0$ such that $M^0(\pm x)=\pm\frac12$ for all $x\ge R^0$. Fix $T>0$, and let $\A:[-\frac12,\frac12]\times[0,T]\to \R$ such that
		\begin{equation}\label{eq:ALip}
			\|\A\|_{\Lip(T)}<\infty,\quad\text{and}\quad \|\pa_\tau\A\|_{\Lip(T)}<\infty.
		\end{equation}
		The Cauchy problem \eqref{eq:M} admits a unique entropy solution
		\[
		M\in BV(\R\times[0,T]).
		\]
		associated to the initial data $M^0$.  Suppose, moreover, that $\tM$ is an entropy solution of 
		\[
		\pa_t \tM + \pa_x(\widetilde\A(\tM,t)) = (\phi*\tM)\pa_x \tM,\quad \tM(x,0)=\tM^0(x)
		\]
		with $\tM^0$ and $\widetilde\A$ satisfying the same assumptions as $M^0$ and $\A$, respectively. Then for any $t\in[0,T]$, the following stability estimate holds:
		\begin{equation}
			\label{eq:stability}
			\|M(\cdot,t)-\tM(\cdot,t)\|_{L^1(\R)}
			\le \|M^0-\tM^0\|_{L^1(\R)} + t \|\A - \widetilde\A\|_{\Lip(T)}.
		\end{equation}
	\end{theorem}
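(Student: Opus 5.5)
The plan is to prove the stability estimate \eqref{eq:stability} by Kruzkov's doubling of variables, adapted as in \cite{leslie2023sticky} to the nonlocal term. Uniqueness is then the special case $\widetilde\A=\A$, $\tM^0=M^0$. Existence for general $\A$ in this section will come from a standard approximation: take piecewise-constant-in-time (or smooth) approximations of the flux together with compactness in $BV$. Alternatively, existence is deferred to the constructive results of Sections~\ref{sec:discbalance}--\ref{sec:gendata}, and the present section supplies uniqueness and stability. I focus on the contraction estimate, which is the heart of the theorem.

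\textbf{Doubling of variables.} Write the entropy inequality \eqref{eq:entropydist} for $M(x,t)$ with the Kruzkov pair \eqref{eq:Kruzkovpair} at level $\alpha=\tM(y,s)$. Write it also for $\tM(y,s)$ with flux $\widetilde\A$ at level $\alpha=M(x,t)$. Test both against $\zeta(x,t,y,s)=\omega_\epsilon(x-y)\omega_\epsilon(t-s)\chi(\frac{x+y}2,\frac{t+s}2)$, add, and let $\epsilon\to0$. Three kinds of terms appear.
\begin{itemize}
\item The flux terms. They produce the usual $\mathrm{sgn}(M-\tM)(\A(M,t)-\A(\tM,t))\pa_x\chi$, plus a mismatch term. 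The mismatch is controlled by $\|\A-\widetilde\A\|_{\Lip(T)}$ times $|\pa_x\chi|$, or, after integrating by parts, by $\|\A-\widetilde\A\|_{\Lip(T)}$ against the total variation of $\tM$, which has mass $\le1$ because $\tM$ is monotone with range in $[-\frac12,\frac12]$.
\item The time mismatch between $\A(\cdot,t)$ and $\A(\cdot,s)$. It is $O(\epsilon\|\pa_\tau\A\|_{\Lip(T)})$, which is exactly where the second assumption in \eqref{eq:ALip} is used. This term vanishes as $\epsilon\to0$ because $M\in BV$ and the $t$–$s$ mollifier has width $\epsilon$.
\item The nonlocal terms. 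Following \cite{leslie2023sticky}, they combine into
\[
(\phi*M)\pa_x|M-\tM| + \mathrm{sgn}(M-\tM)\big(\phi*(M-\tM)\big)\pa_x\tM,
\]
tested against $\chi$.
\end{itemize}

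\textbf{Passing to the global estimate.} Letting $\chi\to\mathbf 1_{[0,t]}$ in time and $\chi\to 1$ in space is legitimate because both solutions equal $\pm\frac12$ outside $[-R(T),R(T)]$. This gives
\[
\|M(t)-\tM(t)\|_{L^1}\le\|M^0-\tM^0\|_{L^1}+t\|\A-\widetilde\A\|_{\Lip(T)}+\int_0^t\mathcal N(\tau)\,\dd\tau,
\]
where $\mathcal N$ collects the nonlocal contributions. The flux-difference term uses $\int|\pa_x \tM|\le1$; this is how the factor $t$, with no extra constant, arises.

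\textbf{Main obstacle: the nonlocal term.} I expect the key step to be showing $\mathcal N\le0$, or at least absorbing it. I would proceed by integrating by parts in $(\phi*M)\pa_x|M-\tM|$ and pairing the result with the second term, using the monotonicity of both $M$ and $\tM$ and the fact that $\phi$ is even and radially decreasing, exactly as in the corresponding lemma of \cite{leslie2023sticky}. Since that argument involves only the right-hand side, which is identical here, it should carry over verbatim. If only an estimate $\mathcal N\le C\|M-\tM\|_{L^1}$ is obtained, Gronwall would give a constant $e^{Ct}$ rather than the clean form \eqref{eq:stability}. I would therefore insist on the sign structure, checking that the symmetric cross term vanishes by the oddness of $\phi'$.

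Finally, the $BV(\R\times[0,T])$ regularity follows from two facts. First, monotonicity in $x$ with range in $[-\frac12,\frac12]$ bounds the spatial variation by $1$. Second, the time variation follows from the equation itself: $\|M(t)-M(s)\|_{L^1}\le C(\|\A\|_{\Lip(T)}+\|\phi\|_{L^1})|t-s|$.
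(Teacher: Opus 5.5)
Your overall strategy matches the paper's. You use Kruzkov doubling of variables, obtain uniqueness as the case $\widetilde\A=\A$, $\tM^0=M^0$, and let the time dependence of the flux enter only through a Lipschitz constant of size $\|\pa_\tau\A\|_{\Lip(T)}|t-s|=O(\epsilon)$. The factor $t\|\A-\widetilde\A\|_{\Lip(T)}$ then comes from spatial total variation equal to $1$. Your second option for the flux mismatch works: integrating by parts in $y$ only, so that only $|\pa_y\tM|$ appears with full coefficient instead of the paper's $\frac12(|\pa_xM|+|\pa_x\tM|)$, is a legitimate variant of the paper's symmetric $x_\pm$ split. Three points, however, do not go through as written; the second is substantive.

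\textbf{The mismatch terms.} The flux and time mismatches sit against the derivative of the spatial mollifier, $\omega_\epsilon'(x-y)$, whose $y$-integral is of order $\epsilon^{-1}$. In the paper's variables, $A_-$ multiplies $\pa_{x_-}w$. Your first option, a bound by $\|\A-\widetilde\A\|_{\Lip(T)}|\pa_x\chi|$, must be discarded. Since $\pa_x\chi=0$ on $[-R(T),R(T)]$, it would give an $L^1$ contraction with no flux term at all, which is false. Likewise, an $O(\epsilon)$ bound on the integrand of the time-mismatch term only gives $O(\epsilon)\cdot O(\epsilon^{-1})=O(1)$. Both pieces are controlled only after moving the derivative onto $\mathrm{sgn}(M-\tM)(f(M)-f(\tM))$ and using the BV chain-rule bound of Lemma~\ref{lem:BVcalc}, $|f|_{\Lip}(|\pa_xM|+|\pa_{\widetilde x}\tM|)$. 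Here $f=A_-$, and $|A_-|_{\Lip}\le\|\pa_\tau\A\|_{\Lip(T)}|\tau_-|+\frac12\|\A-\widetilde\A\|_{\Lip(T)}$.

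\textbf{The nonlocal term.} As written, your plan for $\mathcal N$ would not close. The grouping $(\phi*M)\pa_x|M-\tM|+\mathrm{sgn}(M-\tM)\big(\phi*(M-\tM)\big)\pa_x\tM$ is an exact rewriting for smooth $M,\tM$, but pairing its two terms does not give a sign. Integrating the first term by parts gives $-\int|M-\tM|\,\phi*\pa_xM$. The natural bound on the second is $\int(\phi*|M-\tM|)\,\pa_x\tM=\int|M-\tM|\,\phi*\pa_x\tM$. The sum is $\int|M-\tM|\,\phi*(\pa_x\tM-\pa_xM)$, which can be positive. For example, $M=-\frac12+\frac12H(x+1)+\frac12H(x-1)$ and $\tM=-\frac12+H(x)$ give $\frac12\big(\int_0^1\phi-\int_1^2\phi\big)>0$ for strictly decreasing $\phi$.

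The cancellation needs the symmetric split the paper uses. Write the term as $\frac12\phi*(M+\tM)\,\pa_x|M-\tM|$ plus a remainder. The remainder is bounded by $\frac12|\phi*(M-\tM)|(\pa_xM+\pa_x\tM)\le\frac12(\phi*|M-\tM|)(\pa_xM+\pa_x\tM)$, using $\phi\ge0$ and monotonicity. After integration by parts, the first piece equals $-\frac12\int|M-\tM|\,\phi*(\pa_xM+\pa_x\tM)$ and the remainder bound equals the same quantity with a plus sign. They cancel exactly; this is the oddness-of-$\Phi$ identity in \eqref{e:Mdoubled3}. Taking absolute values before $\epsilon\to0$ also spares you from interpreting $\mathrm{sgn}(M-\tM)\,\pa_x\tM$ at common atoms of $M$ and $\tM$. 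Such atoms are unavoidable in the sticky particle setting.

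\textbf{Existence.} Sections~\ref{sec:discbalance}--\ref{sec:gendata} construct solutions only for $\A(m,t)=A(m)+\kappa tm^2$ with $\kappa<0$. For $\kappa>0$ the sticky particle solution is not entropic (Theorem~\ref{thm:discreterepulsive}). So for general $\A$ your "alternatively" does not apply. You need front tracking, as the paper indicates, or your flux-approximation route made precise. One way is to use autonomous-flux existence from \cite{leslie2023sticky} on each time slab. Then apply your stability estimate slab by slab to get a Cauchy sequence in $C([0,T];L^1)$.
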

	
	The existence part of Theorem~\ref{thm:M} follows from a standard front-tracking scheme, see e.g. \cite{holden2015front}. For the flux $\A$ defined in \eqref{def:At}, when the Poisson force is attractive ($\kappa<0$), the resulting front-tracking approximation coincides with the sticky particle dynamics \eqref{eq:ABS}. This correspondence, however, no longer holds in the repulsive case ($\kappa>0$). We therefore treat these two cases separately in Sections~\ref{sec:gendata} and~\ref{sec:repulsive}.
	
	The remainder of this section is devoted to the uniqueness and stability assertions in Theorem~\ref{thm:M}. The proof largely follows the approach of \cite{leslie2023sticky}, relying on the \emph{doubling of variables} technique introduced by Kruzkov. We emphasize the modifications needed to accommodate the time-dependent flux. Note that the stability bound \eqref{eq:stability} implies uniqueness if we set $\tM^0=M^0$ and $\widetilde\A=\A$. 
	
	For a fixed $(\widetilde{x},\widetilde{\tau})$, consider the Kruzkov entropy pair \eqref{eq:Kruzkovpair} with $\alpha = \tM(\widetilde{x},\widetilde{\tau})$, and a test function  $\zeta(x,t) = w(x,\tau,\widetilde{x},\widetilde{\tau})$ to be specified later. The entropy inequality~\eqref{eq:entropydist}~reads
	\begin{align*}
		0 & \le  \iint |M(x,\tau) - \tM(\widetilde{x},\widetilde{\tau}))|\pa_{\tau} w(x,\tau,\widetilde{x},\widetilde{\tau})\,\dd x\,\dd \tau \\
		& \qquad + \iint \sgn{M(x,\tau) - \tM(\widetilde{x},\widetilde{\tau})}(\A(M(x,\tau),\tau)- \A(\tM(\widetilde{x},\widetilde{\tau}),\tau))\pa_x w(x,\tau,\widetilde{x},\widetilde{\tau}) \,\dd x \,\dd \tau \\
		& \qquad + \iint (\phi\ast M)(x,\tau) (\pa_x |M(x,\tau) - \tM(\widetilde{x},\widetilde{\tau})|) w(x,\tau,\widetilde{x},\widetilde{\tau}) \,\dd x \,\dd \tau.
	\end{align*}
	We repeat these manipulations with $\widetilde\A(\cdot, \widetilde{\tau})$ replacing
	$\A(\cdot,\tau)$ and the roles of  $M(x,\tau)$ and $\tM(\widetilde{x},\widetilde{\tau})$ interchanged.
	Integrating over the remaining free variables in both cases and adding
	the results, we obtain
	\begin{align}
		\label{e:Mdoubled}
		0 & \le \iiiint |M(x,\tau) - \tM(\widetilde{x},\widetilde{\tau})|(\pa_\tau w+\pa_{\widetilde{\tau}} w)(x,\tau,\widetilde{x},\widetilde{\tau})\,\dd x\,\dd \tau\,\dd \widetilde{x}\,\dd \widetilde{\tau} \\
		& \quad + \iiiint \sgn{M(x,\tau) - \tM(\widetilde{x},\widetilde{\tau})}\bigg[ \big(\A(M(x,\tau),\tau)- \A(\tM(\widetilde{x},\widetilde{\tau}),\tau)\big)\pa_x w(x,\tau,\widetilde{x},\widetilde{\tau}) \nonumber\\
		& \hspace{40mm} + \Big(\widetilde\A\big(M(x,\tau),\widetilde{\tau}\big)- \widetilde\A\big(\tM(\widetilde{x},\widetilde{\tau}),\widetilde{\tau}\big)\Big)\pa_{\widetilde{x}} w(x,\tau,\widetilde{x},\widetilde{\tau})\bigg] \,\dd x \,\dd \tau \,\dd \widetilde{x} \,\dd \widetilde{\tau} \nonumber\\
		& \quad + \iiiint w(x,\tau,\widetilde{x},\widetilde{\tau}) \bigg[ (\phi\ast M)(x,\tau)\pa_x |M(x,\tau) - \tM(\widetilde{x},\widetilde{\tau})| \nonumber \\
		& \hspace{50mm} + (\phi*\tM)(\widetilde{x},\widetilde{\tau})\pa_{\widetilde{x}} |M(x,\tau) - \tM(\widetilde{x},\widetilde{\tau})|\bigg] \,\dd x\,\dd \tau\,\dd \widetilde{x}\,\dd \widetilde{\tau}. \nonumber
	\end{align}
	
	We introduce the auxiliary variables
	\[
	x_+ = \tfrac{x+\widetilde{x}}{2}, 
	\quad 
	x_- = \tfrac{x-\widetilde{x}}{2},
	\quad 
	\tau_+ = \tfrac{\tau+\widetilde{\tau}}{2},
	\quad \text{and}\quad
	\tau_- = \tfrac{\tau-\widetilde{\tau}}{2},
	\]
	and take a test function of the form
	\[
	w(x,\tau,\widetilde{x},\widetilde{\tau}) 
	= b_\varepsilon\left( \tfrac{x-\widetilde{x}}{2} \right) b_\varepsilon\left( \tfrac{\tau-\widetilde{\tau}}{2} \right)g \left( \tfrac{x+\widetilde{x}}{2} \right) h_\delta\left( \tfrac{\tau+\widetilde{\tau}}{2} \right) 
	= b_\varepsilon(x_-)b_\varepsilon(\tau_-) g(x_+) h_\delta(\tau_+).
	\]
	Here $b_\varepsilon$ is a standard mollifier supported in $(-\varepsilon,\varepsilon)$,  approximating the Dirac delta distribution as $\varepsilon\to 0+$; the function $g$ is smooth, compactly supported, and identically $1$ on $[-R(T),R(T)]$ (where $R(T)$ is as in \eqref{eq:xbound}); and $h_\delta$ is supported in $[s-\delta, t+\delta]$,  identically $1$ on $[s,t]$, and linear on $[s-\delta, s]$ and $[t, t+\delta]$.
	
	For most of what follows, we will abbreviate $M = M(x,\tau)$, $\widetilde{M} = \widetilde{M}(\widetilde{x},\widetilde{\tau})$.  Performing the change of variables from $(x,y,t,s)$ to $(x_+, x_-, \tau_+, \tau_-)$ in the integrals in \eqref{e:Mdoubled}, we note that the bracketed part of the second term on the right side of \eqref{e:Mdoubled} can be rewritten as
	\[ 
	\Big(A_+(M)-A_+(\tM)\Big)\,\pa_{x_+} w+\Big(A_-(M)-A_-(\tM)\Big)\,
	\pa_{x_-} w,
	\]
	where 
	\[
	A_\pm(m) = A_\pm(m;\tau,\widetilde{\tau}) \triangleq \frac{\A(m,\tau)\pm \widetilde\A(m,\widetilde{\tau})}{2}.
	\]  
	Furthermore, the bracketed part in the last term of \eqref{e:Mdoubled} reads
	\[
	\frac{\phi\ast M + \phi*\tM}{2}\cdot \pa_{x_+} |M - \tM|
	+  \frac{\phi\ast M - \phi*\tM}{2} \cdot\pa_{x_-} |M-\tM|.
	\]
	Substituting the above into \eqref{e:Mdoubled} then yields
	\begin{align}
		\label{e:Mdoubled2}
		& 0 \le  \iiiint |M - \tM|\, \pa_{\tau_+} w\,  \,\dd x_+\,\dd \tau_+\,\dd x_-\,\dd \tau_-\\
		& +\iiiint \bigg[\sgn{M - \tM}(A_+(M)- A_+(\tM)) \pa_{x_+} w + \frac{\phi\ast (M + \tM)}2w\cdot \pa_{x_+} |M - \tM|\bigg] \,\dd x_+\,\dd \tau_+\,\dd x_-\,\dd \tau_-\nonumber\\
		& + \iiiint \bigg[\sgn{M - \tM}(A_-(M)- A_-(\tM))\pa_{x_-}w + \frac{\phi\ast (M - \tM)}2w\cdot \pa_{x_-} |M - \tM|  \bigg]\,\dd x_+\,\dd \tau_+\,\dd x_-\,\dd \tau_-. \nonumber
	\end{align}
	
	When taking $\varepsilon\to 0$, the first two terms in the right-hand side of \eqref{e:Mdoubled2} become
	\begin{align*}
		&\iint |M(x,\tau)-\tM(x,\tau)| g(x) h_\delta'(\tau)\,\dd x\,\dd \tau\\
		& + \iint \sgn{M(x,\tau) - \tM(x,\tau)}\big(A_+(M(x,\tau);\tau,\tau) - A_+(\tM(x,\tau);\tau,\tau)\big) g'(x)h_\delta(\tau)\, \dd x\,\dd \tau\\
		& + \iint \frac{(\phi\ast M)(x,\tau) + (\phi\ast\tM)(x,\tau)}2\,\pa_x |M(x,\tau) - \tM(x,\tau)| g(x) h_\delta(\tau)\,\dd x\,\dd \tau.
	\end{align*}
	Note that the second integral vanishes, since $g'(x)=0$ for $x\in[-R(T),R(T)]$.
	
	For the last term in the right-hand side of \eqref{e:Mdoubled2}, we need to control the partial derivative $\pa_{x_-}$, which is singular when $\varepsilon\to0$. We proceed with the following Lemma. 
	\begin{lemma}\label{lem:BVcalc}
		Let $f:[-\frac12,\frac12]\to \mathbb{R}$ be a Lipschitz function and let $M, \widetilde{M}, x_+, x_-$ be as above. Then $\sgn{M - \tM}(f(M)- f(\tM))$ as a function of $x_-$ belongs to $BV_{\loc}(\R)$, and in the sense of measures, we have
		\[
		\Big|\pa_{x_-}\Big(\sgn{M - \tM}(f(M)- f(\tM)\Big)\Big|\leq |f|_{\Lip} \Big( |\pa_xM| + |\pa_{\widetilde{x}}\tM|\Big).
		\]
	\end{lemma}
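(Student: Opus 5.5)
The plan is to reduce the statement to a one-variable BV chain-rule computation. We fix $x_+$ (and the time variables). Then $M = M(x_+ + x_-,\tau)$ and $\tM = \tM(x_+ - x_-,\widetilde\tau)$, so as functions of $x_-$ the map $x_-\mapsto M$ is nondecreasing and $x_-\mapsto \tM$ is nonincreasing. Both are bounded and monotone, hence $BV_{\loc}$. We introduce the two-variable function
\[
G(a,b) \triangleq \sgn{a-b}\big(f(a)-f(b)\big), \qquad a,b\in[-\tfrac12,\tfrac12].
\]
The key observation is that $G$ is Lipschitz in each variable separately with constant $|f|_{\Lip}$. In fact $G(a,b) = |F_b(a)|$, where $F_b(a)$ is a Lipschitz-in-$a$ quantity. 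More directly, for fixed $b$ and $a_1,a_2$ on the same side of $b$, we have $|G(a_1,b)-G(a_2,b)| = |f(a_1)-f(a_2)|$. If $a_1<b<a_2$, we bound $|G(a_1,b)-G(a_2,b)|\le |f(a_1)-f(b)|+|f(a_2)-f(b)|\le |f|_{\Lip}(a_2-a_1)$. The symmetric argument handles the $b$ variable. Note that $G$ itself need not be $C^1$, so we avoid any pointwise chain rule and work with total variation on intervals.

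Next, on any interval $I$ in the $x_-$ variable and any partition $x_-^0<\dots<x_-^N$, we split each increment:
\[
|G(M_{k+1},\tM_{k+1})-G(M_k,\tM_k)| \le |G(M_{k+1},\tM_{k+1})-G(M_k,\tM_{k+1})| + |G(M_k,\tM_{k+1})-G(M_k,\tM_k)|.
\]
By the separate Lipschitz bounds, this is at most $|f|_{\Lip}\big(|M_{k+1}-M_k| + |\tM_{k+1}-\tM_k|\big)$. Summing over $k$ and using monotonicity, the total variation of the composite over $I$ is bounded by $|f|_{\Lip}$ times the sum of the variations of $x_-\mapsto M$ and $x_-\mapsto\tM$ over $I$. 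This proves the $BV_{\loc}$ claim. Applying the same estimate on arbitrary subintervals yields the measure inequality. Specifically, $|\pa_{x_-}G|(I)\le |f|_{\Lip}\big(|\pa_{x_-}M|(I)+|\pa_{x_-}\tM|(I)\big)$ for every interval $I$, and outer regularity of Radon measures extends this to all Borel sets.

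Finally, we identify $\pa_{x_-}M$ with $\pa_xM$ and $\pa_{x_-}\tM$ with $-\pa_{\widetilde x}\tM$ under the linear change of variables $x = x_++x_-$, $\widetilde x = x_+-x_-$ at fixed $x_+$. The push-forward by a unit-speed translation or reflection preserves total variation, which gives the stated bound $|f|_{\Lip}(|\pa_xM|+|\pa_{\widetilde x}\tM|)$ in the sense of measures.

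The main subtlety is the sign discontinuity of $\sgn{a-b}$ at $a=b$, which prevents a naive chain rule for $G$. The case analysis showing that $G$ is separately Lipschitz, including the crossing case $a_1<b<a_2$, is exactly what circumvents it. Beyond that, the only care needed is the standard passage from interval estimates to measures.
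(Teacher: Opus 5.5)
Your proof is correct, but it takes a different route from the one the paper points to. The paper gives no argument of its own: it says the lemma rests on the BV chain rule and refers to \cite{leslie2023sticky}. A chain-rule proof treats $x_-\mapsto(M,\tM)$ as a BV map into the square and splits the derivative of its composition with $G(a,b)=\sgn{a-b}(f(a)-f(b))$ into a diffuse part and a jump part. That needs some care, because $G$ is only Lipschitz and has a kink along the diagonal $a=b$.

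You avoid any chain rule. You observe that $G$ is $|f|_{\Lip}$-Lipschitz in each variable separately; your crossing case $a_1<b<a_2$ is exactly where the sign change is absorbed. Splitting each increment over a partition then bounds the pointwise variation of the composite directly, and in one variable that controls the derivative measure.

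What this buys is a short, self-contained proof that gives exactly the constant $|f|_{\Lip}$ in front of $|\pa_xM|+|\pa_{\widetilde x}\tM|$. This sharpness matters. With $f=\mathrm{id}$, it is what makes the cancellation from the oddness of $\Phi$ in \eqref{e:Mdoubled3} exact. A black-box estimate $|D(G\circ u)|\le \Lip(G)\,|Du|$ with Euclidean norms would be weaker, by up to a factor $\sqrt2$, since $|a-b|$ has Euclidean Lipschitz constant $\sqrt2$. The chain-rule route gives more, namely an explicit formula for the derivative measure, but the lemma does not need that.

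Two minor remarks. First, the aside $G(a,b)=|F_b(a)|$ is false when $f$ is not monotone, since $G$ can then be negative; you never use it, so it is harmless. Second, the passage to measures is cleanest on open intervals $(c,d)$ in the $x_-$ variable. There, $|\pa_{x_-}G|((c,d))$ is at most the pointwise variation of the composite over $(c,d)$. The variation of each monotone factor over $(c,d)$ equals the mass its derivative measure gives to $(c,d)$. You then extend to open sets by countable disjoint unions and to Borel sets by outer regularity. On closed intervals, the endpoint atoms of the derivative measures are not seen by the pointwise variation over the interval itself.
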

	The proof of the Lemma \ref{lem:BVcalc} is based on the BV chain rule. See \cite{leslie2023sticky} for details of the proof.
	Applying Lemma \ref{lem:BVcalc} with $f(m) = A_-(m)$ we obtain
	\begin{align*}
		\Big|\pa_{x_-}\Big(\sgn{M - \tM}(A_-(M)- A_-(\tM)\Big)\Big|& \leq |A_-|_{\Lip} \Big( |\pa_xM| + |\pa_{\widetilde{x}}\tM|\Big)
	\end{align*} 
	Similarly, when $f(m) = m$, we get
	\begin{align*}  \Big|\pa_{x_-}|M- \tM|\Big|& \leq |\pa_xM| + |\pa_{\widetilde{x}}\tM|.
	\end{align*}
	Note that
	\[
	A_-(m) = \frac{\A(m,\tau)-\A(m,\widetilde{\tau})}{2} + \frac{\A(m,\widetilde{\tau}) - \widetilde\A(m,\widetilde{\tau})}{2}.
	\]
	Then, $|A_-|_{\Lip}$ can be estimated as follows (recalling the notation \eqref{eq:ALip}):
	\begin{align}
		|A_-|_{\Lip} 
		&\leq \|\pa_\tau\A\|_{\Lip(T)}\,|\tau_-| + \frac12\|\A-\widetilde\A\|_{\Lip(T)}.\label{eq:Aminus}
	\end{align}
	Therefore, the last term in \eqref{e:Mdoubled2} can be controlled by 
	\begin{align*}
		&\iiiint \Big(\|\pa_\tau\A\|_{\Lip(T)}\,|\tau_-| + \frac12\|\A-\widetilde\A\|_{\Lip(T)} + \frac{|\phi\ast (M - \tM)|}2\Big)
		\Big( |\pa_xM| + |\pa_y\tM|\Big)w\, \,\dd x_+\,\dd \tau_+\,\dd x_-\,\dd \tau_-.
	\end{align*}
	Taking $\varepsilon\to0$, the first part vanishes, indicating that the time dependence of the flux does not affect the stability. The remaining part becomes
	\[
	\frac12\iint \Big(\|\A-\widetilde\A\|_{\Lip(T)} + \big|(\phi\ast M)(x,\tau) - (\phi\ast\tM)(x,\tau)\big|\Big)\Big(|\pa_xM(x,\tau) |+|\pa_x\tM(x,\tau)|\Big) g(x) h_\delta(\tau) \,\dd x\,\dd \tau.
	\]
	
	We now collect all the estimates above and obtain
	\begin{align*}
		0 \le & \iint |M(x,\tau)-\tM(x,\tau)| g(x) h_\delta'(\tau)\,\dd x\,\dd \tau\\
		& +  \frac12\|\A-\widetilde\A\|_{\Lip(T)}\iint \Big(|\pa_xM(x,\tau) |+|\pa_x\tM(x,\tau)|\Big) g(x) h_\delta(\tau) \dd x\,\dd \tau\\
		& + \frac12 \iint \bigg[ \phi*(M + \tM) \cdot \pa_x |M - \tM| + (\phi*|M-\tM|)\cdot |\pa_x M+\pa_x \tM| \bigg] g(x)h_\delta(\tau) \,\dd x \,\dd \tau. 
	\end{align*}
	The final step is precisely the same as in \cite{leslie2023sticky}. Since $g(x)=1$ for $x\in[-R(T),R(T)]$ and $h_\delta\to \textbf{1}_{[s,t]}$, we take $\delta\to0$ and deduce
	\begin{align}\label{e:Mdoubled3}
		& \int |M(x,t) - \tM(x,t)|\,\dd x - \int |M(x,s) - \tM(x,s)|\,\dd x \\
		& \leq\, \frac12 \|\A-\widetilde\A\|_{\Lip(T)}
		\int_s^t \int |\pa_x M| + |\pa_x \tM| \dd x\,\dd\tau \nonumber\\
		& \quad + \frac12 \int_s^t \int \bigg[ \Phi*(\pa_x M + \pa_x \tM) \cdot \pa_x|M-\tM| + |\pa_x M + \pa_x\tM| (\Phi*\pa_x|M-\tM|)\bigg] \dd x\,\dd\tau\nonumber\\
		& \leq \|\A-\widetilde\A\|_{\Lip(T)}(t-s),\nonumber
	\end{align}
	where we have used the fact that $\int|\pa_x M|\dd x=\int|\pa_x\tM|\dd x = 1$, and the integral in the penultimate line of \eqref{e:Mdoubled3} vanishes since $\Phi$ is odd.
	The stability bound \eqref{eq:stability} follows immediately, upon taking $s=0$.
	
	\begin{remark}[Regularity requirement of $\A$ in $t$]
		For the flux $\A$ defined in \eqref{def:At}, since $m\in[-\frac12,\frac12]$, the assumptions in \eqref{eq:ALip} are readily verified:
		\[
		\|\A\|_{\Lip(T)} \le |A|_{\Lip} + |\kappa| T,\quad
		\|\pa_\tau\A\|_{\Lip(T)} \le  |\kappa|.
		\]
		
		For a general time-dependent flux, the assumption $\|\pa_t \A\|_{\Lip(T)}<\infty$ is used only in the estimate \eqref{eq:Aminus}.  In fact, this requirement can be relaxed: it is sufficient to assume that $|\A(\cdot,t)|_{\Lip}$ depends continuously on $t$.
	\end{remark}
	
	\section{The sticky particle dynamics}
	\label{sec:discbalance}
	
	In this section, we focus on the following agent-based interacting system corresponding to the EPA system \eqref{eq:EPA}:
	\begin{equation}
		\label{eq:ABS}
		\frac{\dd x_i}{\dd t} = v_i,\quad
		m_i\frac{\dd v_i}{\dd t} = \F_i \triangleq \kappa\sum_{j=1}^N m_im_j \sgn{x_i-x_j} + \sum_{j=1}^N m_im_j \phi(x_j - x_i) (v_j - v_i)
	\end{equation}
	Here $(m_i,x_i(t), v_i(t))_{i=1}^N$ denote the masses (assumed time-independent), positions, and velocities of the $N$ agents (particles), and the two sums represent the Poisson force and the Cucker-Smale type alignment force. We assume the total mass is 1:
	\[ \sum_{i=1}^N m_i = 1,\]
	and we denote by $(\theta_i)_{i=0}^N$ the cumulative mass (shifted by $-\frac12$):
	\[
	\theta_i \triangleq -\frac{1}{2}+\sum_{j=1}^i m_j,\quad i=0,1,\ldots,N.
	\]

	In Section~\ref{sec:sticky}, we introduce the sticky particle collision rule supplementing \eqref{eq:ABS}. Some basic properties of the sticky particle dynamics are outlined in Section~\ref{sec:stickyprops}.  
	
	Next, we establish connections of the sticky particle dynamics to the scalar balance law \eqref{eq:balancelaw}. In particular, we consider the following discretized version of the scalar balance law:
	\begin{equation}
		\label{e:SBdiscreteintro}
		\pa_t M_N + \pa_x(\A_N(M_N,t)) = (\phi\ast M_N) \pa_x M_N.
	\end{equation} 
	Here $M_N$ is piecewise constant, $\A_N(m,t) = A_N(m) + \kappa t m^2$, and $A_N$ is piecewise linear, with its set of breakpoints containing all the values of $M_N$. We will show that 
	\begin{equation} 
		\label{e:MNform}
		M_N(x,t) = -\frac12 + \sum_{i=1}^N m_{i,N} H(x - x_{i,N}(t))
	\end{equation} 
	generates a solution of \eqref{e:SBdiscreteintro}, where  $x_{i,N}(t)$ is a solution of the sticky particle dynamics \eqref{eq:ABS}. 
	In Section~\ref{sec:discrete}, we show that \eqref{e:MNform} satisfies the Rankine-Hugoniot condition \eqref{e:RH} and therefore it is a weak solution of the discretized balance law \eqref{e:SBdiscreteintro}.
	
	In Section~\ref{sec:attractive}, we focus on the attractive case $\kappa<0$ and further verify the Oleinik entropy condition \eqref{eq:Oleinik}. Hence, \eqref{e:MNform} is the entropy solution of the discretized balance law \eqref{e:SBdiscreteintro}. 
	Notably, the argument is no longer valid in the repulsive case $\kappa>0$. Further discussion will be postponed to Section~\ref{sec:repulsive}.

	\subsection{Sticky particle collision rules}
	\label{sec:sticky}

	Let us set the following notation:
	\[ 
	J_i(t)\triangleq\big\{j\in 1, \ldots, N:x_j(t) = x_i(t)\big\}, 
	\]
	that is, $J_i(t)$ denotes the indices of all agents that occupy the same position as agent $i$ at time~$t$.  We say that $t$ is a \textit{collision time} if the cardinality of at least one of these sets changes at time $t$.  At collision times, we require two things.  First, collisions must conserve momentum:
	\begin{equation}
		\label{e:collconsmom}
		\sum_{j\in J_i(t)} m_j v_j(t+) = \sum_{j\in J_i(t)} m_j v_j(t-)
	\end{equation}
	We also ask that agents that collide should stick together for all time.  This is the `sticky particle' collision rule.  We enforce this by requiring the sets $J_i(t)$ to be nondecreasing in $t$:
	\begin{equation}
		\label{e:sticky}
		J_i(s)\subseteq J_i(t),\qquad 0\le s\le t.    
	\end{equation}

	The discrete dynamics that we focus on thus have two key ingredients: First, at all noncollisional times they obey the ODE system \eqref{eq:ABS}, and second, at collision times, they obey \eqref{e:collconsmom} and \eqref{e:sticky}.  Since at most $N-1$ collisions can occur, these requirements define a globally well-posed system.
	
	In what follows, we will make the convention that each $v_i$ is right-continuous, i.e., $v_i(t+) = v_i(t)$ for all $t\ge 0$.  We write $x_i(0) = x_i^0$ and $v_i(0-) = v_i^0$, the latter convention being made to ensure compatibility of right continuity at time zero with the collision rules.  We index the agents in increasing order from left to right according to their initial position: $x_1^0\le x_2^0\le \cdots \le x_N^0$. Since the sticky particle rule does not allow particles to cross each other, this ordering remains in place all time:
	\begin{equation}\label{eq:order}	
		x_1(t)\le x_2(t)\le \cdots \le x_N(t),\quad \forall~t\geq0.
	\end{equation}
	Finally, it will be convenient to set notation for lowest and highest indices in a given $J_i(t)$:
	\[
	i_*(t) = \min J_i(t), 
	\qquad 
	i^*(t) = \max J_i(t).
	\] 
	
	\subsection{Basic properties of the sticky particle dynamics}
	
	\label{sec:stickyprops} We now highlight a few properties of the sticky particle dynamics that will be used later.  Our first observation is that since the agents are required to follow the ODE system \eqref{eq:ABS} at noncollisional times, particles occupying the same position must have the same velocity. Therefore, we must have $v_j(t) = v_i(t)$ for all $j\in J_i(t)$, for all noncollisional times $t$.  The sticky particle collision rules guarantee that this must hold for collisional times as well, and that in fact (using \eqref{e:collconsmom}),
	\begin{equation}
		\label{eq:vstick}
		v_i(t) = \frac{\sum_{j\in J_i(t)} m_j v_j(t-)}{\sum_{j\in J_i(t)} m_j}.
	\end{equation}
	
	Next, we bound the diameter of the agents' positions and velocities.
	\begin{proposition}\label{prop:xvbound}
		Suppose $(x_i(t),v_i(t))_{i=1}^N$ satisfies the discrete dynamics defined in Section~\ref{sec:sticky}, with initial data
		\[
		\{x_i^0\}_{i=1}^N\subset [-R^0,R^0],\quad \{v_i^0\}_{i=1}^N\subset [-V^0,V^0].
		\]
		Then for any $t\geq0$, we have
		\begin{align}
			& \{v_i(t)\}_{i=1}^N\subset [-V(t),V(t)],\quad V(t) \triangleq V^0 + |\kappa| t. \label{eq:vbound} \\
			& \{x_i(t)\}_{i=1}^N\subset [-R(t),R(t)],\quad R(t) \triangleq R^0 + V^0t +\tfrac12|\kappa| t^2,\label{eq:xbound}
		\end{align}
	\end{proposition}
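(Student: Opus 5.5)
The plan is to prove the velocity bound \eqref{eq:vbound} first by a maximum-principle argument on the extremal velocity, and then to integrate it to obtain \eqref{eq:xbound}. Define $\overline v(t)=\max_i v_i(t)$ and $\underline v(t)=\min_i v_i(t)$. It suffices to show $\overline v(t)\le V^0+|\kappa|t$; the lower bound follows by the symmetric argument, or equivalently by applying the same reasoning to the reflected system $x_i\mapsto -x_i$, $v_i\mapsto -v_i$, which has the same structure.

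\emph{Collision times.} By \eqref{eq:vstick}, each post-collision velocity is a mass-weighted average of pre-collision velocities. An average cannot exceed the largest of the averaged values, so $\overline v(t)\le \overline v(t-)$ and $\underline v(t)\ge \underline v(t-)$ at every collision time. The same reasoning covers $t=0$ with the convention $v_i(0-)=v_i^0$.

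\emph{Noncollisional intervals.} On each interval between consecutive collision times (there are at most $N-1$ of them), the $v_i$ are smooth and $\overline v$ is Lipschitz, being a maximum of finitely many $C^1$ functions. At almost every $t$ we have $\dot{\overline v}(t)=\dot v_i(t)$ for some index $i$ attaining the maximum. Dividing \eqref{eq:ABS} by $m_i$ gives
\[
\dot v_i=\kappa\sum_j m_j\,\sgn{x_i-x_j}+\sum_j m_j\phi(x_j-x_i)(v_j-v_i).
\]
The alignment sum is $\le 0$ because $v_j\le v_i$ and $\phi\ge0$. The Poisson sum is bounded in absolute value by $|\kappa|\sum_j m_j=|\kappa|$. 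Hence $\dot{\overline v}\le|\kappa|$ almost everywhere, and integrating gives $\overline v(t)-\overline v(s)\le|\kappa|(t-s)$ on each such interval.

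\emph{Concatenation.} Chaining the interval estimates with the nonincreasing jumps at collision times yields $\overline v(t)\le V^0+|\kappa|t$, which is \eqref{eq:vbound}.

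\emph{Position bound.} Each $x_i$ is continuous and piecewise $C^1$ with $\dot x_i=v_i$, including across collisions, since positions do not jump. Therefore
\[
|x_i(t)|\le |x_i^0|+\int_0^t V(s)\,\dd s\le R^0+V^0t+\tfrac12|\kappa|t^2,
\]
which is \eqref{eq:xbound}.

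The only delicate point is the nonsmoothness of $\overline v$. The a.e.\ derivative argument (a Danskin-type lemma) combined with finitely many jump times handles it, so I expect no real obstacle. I also use the fact that $\phi\ge0$, implicit in the model since $\phi$ is a communication protocol.
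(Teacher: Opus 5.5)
Your proposal is correct and follows essentially the same route as the paper: post-collision velocities are mass-weighted averages by \eqref{eq:vstick}, so the extremal velocities cannot increase in magnitude at collisions. On noncollisional intervals, the a.e.\ derivative of $\max_i v_i$ is at most $|\kappa|$ because the alignment term is nonpositive at the maximizer; integrating this velocity bound then gives \eqref{eq:xbound}, and your version simply adds details the paper leaves implicit (the a.e.\ differentiability of the maximum and the $t=0$ collision convention).
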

	\begin{proof}
		It is clear from \eqref{eq:vstick} that $\max_{1\leq i\leq N} v_i(t) \le \max_{1\leq i\leq N} v_i(t-)$ and similarly for the minimum velocity.  Therefore, it suffices to establish \eqref{eq:vbound} on noncollisional time intervals, and then \eqref{eq:xbound} follows immediately.
		
		From \eqref{eq:ABS} we have that for almost all $t\geq0$, 
		\[
		\frac{\dd}{\dd t} \max_{1\leq i\leq N} v_i(t)\leq |\kappa| \sum_{j=1}^N m_j +0 = |\kappa|,\quad\text{then}\quad 
		\max_{1\leq i\leq N} v_i(t)\leq \max_{1\leq i\leq N} v_i^0+|\kappa| t.
		\]
		The same argument applies to $\min_i v_i(t)$, yielding \eqref{eq:vbound}.
	\end{proof}

	With the notation and conventions we have introduced, the term corresponding to the Poisson force in our velocity equation can be written in a slightly more transparent and useful form, namely 
	\begin{equation}
		\label{e:Poissondiscterm}
		\sum_{j=1}^N m_j \sgn{x_i(t)-x_j(t)}
		= \theta_{i_*(t)-1} + \theta_{i^*(t)}.
	\end{equation}
	Indeed, we have (omitting time dependence for brevity)
	\begin{align*}
		\sum_{j=1}^N m_j \sgn{x_i-x_j} & = \sum_{j=1}^{i_*-1}m_j - \sum_{j=i^*+1}^N m_j = \big(\theta_{i_*-1}+\tfrac12\big) - \big(1-(\theta_{i^*}+\tfrac12)\big)
		= \theta_{i_*-1} + \theta_{i^*}.
	\end{align*}

	Another consequence of \eqref{e:Poissondiscterm} is the following lemma, which will be useful later in this section and also in Section \ref{sec:attractive}.
	\begin{lemma}
		\label{l:thetaid}
		For any $i\in \{1, \ldots, N\}$, $t\ge 0$, $k\in J_i(t)$, and $s\in [0,t]$, the following holds:
		\begin{equation} 
			\label{e:thetaid}
			\sum_{j=i_*(t)}^k m_j (\theta_{j^*(s)} + \theta_{j_*(s) - 1}) = \theta_k^2 - \theta_{i_*(t)-1}^2 + (\theta_k - \theta_{k_*(s) - 1})(\theta_{k^*(s)} - \theta_k). 
		\end{equation} 
	\end{lemma}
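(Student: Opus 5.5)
The plan is a direct telescoping computation. The only structural input is that clusters at time $s$ refine the cluster $J_i(t)$. The conceptual step I expect to need the most care is the claim that each $J_j(s)$ with $j\in J_i(t)$ is a contiguous block of indices contained in $J_i(t)$; after that, the remaining algebra is routine.

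\emph{Step 1: cluster structure.} By the ordering \eqref{eq:order}, each set $J_j(s)$ is a contiguous block of indices $\{j_*(s),\dots,j^*(s)\}$. Indeed, if $x_a(s)=x_b(s)$ with $a<b$, then every index between them occupies the same position. By the sticky rule \eqref{e:sticky}, $J_j(s)\subseteq J_j(t)$. Moreover, for $j\in J_i(t)$ we have $J_j(t)=J_i(t)$. Hence the index range $\{i_*(t),\dots,k\}$ splits into two parts:
\begin{itemize}
\item finitely many complete time-$s$ clusters $\{a_\ell,\dots,b_\ell\}$, $\ell=1,\dots,L$, with $a_1=i_*(t)$, $a_{\ell+1}=b_\ell+1$, and $b_L=k_*(s)-1$ (possibly $L=0$);
\item the partial block $\{k_*(s),\dots,k\}$ of the cluster $J_k(s)$.
\end{itemize}
The equality $a_1=i_*(t)$ holds because the cluster of $i_*(t)$ at time $s$ lies inside $J_i(t)$, so it cannot begin to the left of $i_*(t)$. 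Within a block $\{a,\dots,b\}$ of a cluster $J_j(s)$, the quantities $j_*(s)=a$ and $j^*(s)=b$ are constant in $j$.

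\emph{Step 2: complete clusters.} Using $\sum_{j=a}^b m_j=\theta_b-\theta_{a-1}$, the contribution of a complete cluster is
\[
\sum_{j=a}^{b} m_j(\theta_b+\theta_{a-1})=(\theta_b-\theta_{a-1})(\theta_b+\theta_{a-1})=\theta_b^2-\theta_{a-1}^2 .
\]
Summing over $\ell$ telescopes, since $a_{\ell+1}-1=b_\ell$. The total is $\theta_{k_*(s)-1}^2-\theta_{i_*(t)-1}^2$.

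\emph{Step 3: the partial cluster.} Write $p=k_*(s)$ and $q=k^*(s)$. The partial block contributes
\[
\sum_{j=p}^{k} m_j(\theta_{q}+\theta_{p-1})=(\theta_k-\theta_{p-1})(\theta_q+\theta_{p-1}).
\]
Splitting $\theta_q+\theta_{p-1}=(\theta_k+\theta_{p-1})+(\theta_q-\theta_k)$ rewrites this as
\[
\theta_k^2-\theta_{p-1}^2+(\theta_k-\theta_{p-1})(\theta_q-\theta_k).
\]

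\emph{Step 4: combining.} Adding Steps 2 and 3, the term $\theta_{p-1}^2$ cancels. What remains is exactly the right-hand side of \eqref{e:thetaid}.

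Two degenerate cases are consistent with the same formula. If $L=0$, the Step 2 sum is empty and its telescoped value $\theta_{p-1}^2-\theta_{i_*(t)-1}^2$ is zero. If $k=q$, the extra product vanishes.
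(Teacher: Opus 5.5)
Your proof is correct and follows essentially the same route as the paper. You split the sum at $k_*(s)$, telescope the complete time-$s$ clusters using the single-cluster identity $\theta_{\ell^*(s)}^2-\theta_{\ell_*(s)-1}^2$, and expand the partial block $\{k_*(s),\dots,k\}$ separately; your Step 1 spells out, via \eqref{eq:order} and \eqref{e:sticky}, the contiguity and refinement of clusters that the paper only invokes briefly.
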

	\begin{proof}
		We first observe the identity
		\begin{align}
			\sum_{j\in J_\ell(s)} m_j (\theta_{j^*(s)} + \theta_{j_*(s) - 1}) & = \Big(\sum_{j\in J_\ell(s)} m_j\Big) (\theta_{\ell^*(s)} + \theta_{\ell_*(s) - 1})\nonumber\\
			& = (\theta_{\ell^*(s)} - \theta_{\ell_*(s) - 1})(\theta_{\ell^*(s)} + \theta_{\ell_*(s) - 1}) = \theta_{\ell^*(s)}^2 - \theta_{\ell_*(s) - 1}^2,\label{e:idsinglecluster}
		\end{align}
		as particles in each cluster $J_\ell(s)$ share the same $\ell_*(s)$ and $\ell^*(s)$. Next, we separate the summation in \eqref{e:thetaid} into two parts
		\[
		\sum_{j=i_*(t)}^k m_j (\theta_{j^*(s)} + \theta_{j_*(s) - 1}) = \sum_{j=i_*(t)}^{k_*(s)-1} m_j (\theta_{j^*(s)} + \theta_{j_*(s) - 1}) + \sum_{j=k_*(s)}^k m_j (\theta_{j^*(s)} + \theta_{j_*(s) - 1})
		\]
		The first part contains multiple clusters at time $s$, thanks to \eqref{e:sticky}. For each cluster, applying \eqref{e:idsinglecluster}, the telescoping sum yields
		\[
		\sum_{j=i_*(t)}^{k_*(s)-1} m_j (\theta_{j^*(s)} + \theta_{j_*(s) - 1}) = \theta_{k_*(s)-1}^2 - \theta_{i_*(t)-1}^2.
		\]
		For the second part, a computation analogous to \eqref{e:idsinglecluster} yields
		\[
		\sum_{j=k_*(s)}^k m_j (\theta_{j^*(s)} + \theta_{j_*(s) - 1}) = \Big(\sum_{j=k_*(s)}^k m_j\Big) (\theta_{k^*(s)} + \theta_{k_*(s) - 1})
		= (\theta_{k} - \theta_{k_*(s) - 1})(\theta_{k^*(s)} + \theta_{k_*(s) - 1}).
		\]
		Combining the two parts yields \eqref{e:thetaid}, as desired.
	\end{proof}
	We now introduce a key conserved quantity.  To motivate it, we start by writing the velocity equation as  
	\begin{equation}
		\label{e:vi2}
		\frac{\dd v_i}{\dd t} = \kappa( \theta_{i_*(t)-1} + \theta_{i^*(t)} ) + \sum_{j=1}^N m_j \phi(x_j - x_i) (v_j - v_i).
	\end{equation}
	It is apparent that the right side of the equation is (at non-collisional times) a perfect time derivative, which motivates the following definition:
	\begin{equation}\label{def:psiit}
		\psi_i(t) = v_i(t) - \kappa t (\theta_{i_*(t)-1}+\theta_{i^*(t)})  + \sum_{j=1}^N m_j \Phi(x_i(t) - x_j(t)),\quad i=1,\ldots,N.
	\end{equation}
	The quantities $\{\psi_i(t)\}_{i=1}^N$ are the discrete analogue of the function $\psi(x,t)$, defined in \eqref{def:psi}. Indeed, we have the following proposition.
	
	\newpage
	\begin{proposition}\label{prop:psii}
		The quantities $\psi_i$ have the following properties, for $i = 1,\ldots, N$.
		\begin{enumerate}[label = \emph{(\roman*)}]
			\item If $t$ is not a collision time, 
			\begin{equation}
				\label{eq:psii0}
				\frac{\dd}{\dd t}\psi_i(t) = 0.
			\end{equation}
			\item  If $t$ is a collision time, then
			\begin{equation}
				\sum_{j\in J_i(t)} m_j\psi_{j}(t) = \sum_{j\in J_i(t)} m_{j}\psi_{j}(t-).
			\end{equation}
			\item As a consequence of \emph{(i)} and \emph{(ii)}, we have 
			\begin{equation}\label{eq:psiit}
				\sum_{j\in J_i(t)} m_j\psi_{j}(s) = \sum_{j\in J_i(t)} m_{j}\psi_{j}(t),\qquad \text{ for all }\; 0\le s\le t.
			\end{equation}
		\end{enumerate}	
	\end{proposition}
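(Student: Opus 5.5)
We argue directly from the definition \eqref{def:psiit} and the velocity equation \eqref{e:vi2}, taking the three parts in order.

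\emph{Part (i).} Suppose $t$ is not a collision time. Then the clusters $J_i$ are constant on a neighborhood of $t$, so $i_*(\cdot)$ and $i^*(\cdot)$ are locally constant. Hence $\theta_{i_*-1}+\theta_{i^*}$ is a constant there. Differentiating \eqref{def:psiit} gives
\[
\dot\psi_i = \dot v_i - \kappa(\theta_{i_*-1}+\theta_{i^*}) + \sum_j m_j \phi(x_i-x_j)(v_i-v_j),
\]
where we used $\Phi'=\phi$. The alignment sum in \eqref{e:vi2} is $\sum_j m_j\phi(x_j-x_i)(v_j-v_i)$. Since $\phi$ is even (it is radially symmetric), the two alignment sums cancel, and so do the two $\kappa$ terms. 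This gives \eqref{eq:psii0}. If $\phi$ has a weak singularity, collided agents have $v_j=v_i$, so the terms with $x_j=x_i$ vanish and no singular evaluation occurs.

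\emph{Part (ii).} Suppose $t$ is a collision time. Positions are continuous in time, so the $\Phi$ term in \eqref{def:psiit} is continuous. The jump $\psi_j(t)-\psi_j(t-)$ therefore comes only from the velocity and from the $\theta$ term. Summing over the merged cluster $J_i(t)$, the velocity contributions cancel by momentum conservation \eqref{e:collconsmom}. It remains to show
\[
\sum_{j\in J_i(t)} m_j(\theta_{j_*(t)-1}+\theta_{j^*(t)}) = \sum_{j\in J_i(t)} m_j(\theta_{j_*(t-)-1}+\theta_{j^*(t-)}).
\]
By the sticky rule \eqref{e:sticky}, just before $t$ the set $J_i(t)$ is a union of consecutive clusters, each with constant $j_*, j^*$. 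Apply \eqref{e:idsinglecluster} to each of these clusters and telescope. The right side then equals $\theta_{i^*(t)}^2-\theta_{i_*(t)-1}^2$. By \eqref{e:idsinglecluster} applied at time $t$, the left side equals the same quantity. Equivalently, this is Lemma~\ref{l:thetaid} with $k=i^*(t)$, for which the last term vanishes. Evaluating at $s$ slightly less than $t$ gives the pre-collision clustering.

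\emph{Part (iii).} Fix $t$ and $s\le t$. On $[s,t]$ there are only finitely many collision times. Between consecutive collision times each $\psi_j$ is constant by (i), so the weighted sum over $J_i(t)$ is constant there. At a collision time $\tau\in(s,t]$, the set $J_i(t)$ is a disjoint union of clusters $J_\ell(\tau)$, by \eqref{e:sticky}. Apply (ii) to each of these clusters and add. This shows the sum over $J_i(t)$ does not jump at $\tau$. Chaining these steps gives \eqref{eq:psiit}.

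The main point needing care is the $\theta$-bookkeeping in (ii). The cancellation works only after summing over the whole cluster, and this is exactly where \eqref{e:idsinglecluster} is used. The rest is routine.
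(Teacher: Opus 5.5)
Your proposal is correct and follows the paper's own proof. Part (i) is direct differentiation compared against \eqref{e:vi2}, and part (ii) treats the three terms of \eqref{def:psiit} separately, using momentum conservation, continuity of positions, and Lemma~\ref{l:thetaid} with $s=t-$ and $k=i^*(t)$. Your explicit chaining argument for (iii) and your use of the evenness of $\phi$ in (i) just spell out details the paper leaves implicit.
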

	\begin{proof}
		Statement (i) is clear after comparing the definition \eqref{def:psiit} with \eqref{e:vi2}. As for (ii), we check the proposed identity for each of the three terms in the definition \eqref{def:psiit} of $\psi_i$.  The collision rule \eqref{e:collconsmom} takes care of the contribution from $v_i$.  The last term in  \eqref{def:psiit} is continuous in time, since the trajectories $(x_i(t))_{i=1}^N$ are continuous.  
		Therefore, it remains to check the term in the definition of $\psi_i(t)$ involving $\theta_{i_*(t)-1} + \theta_{i^*(t)}$.  We use Lemma \ref{l:thetaid} with $s = t-$ and $k = i^*(t)$. Then the second term in \eqref{e:thetaid} vanishes, and we are left with
		\begin{align*}
			\sum_{j\in J_i(t)} m_{j}\big(\theta_{j_*(t-)-1}+\theta_{j^*(t-)}\big)
			& = \theta_{i^*(t)}^2 - \theta_{i_*(t)-1}^2 = 
			\sum_{j\in J_i(t)} m_j (\theta_{j^*(t)} + \theta_{j_*(t)-1}).
		\end{align*}
	\end{proof}

	We make one final observation before moving on: By the same logic we used to conclude \eqref{eq:vstick}, the sticky particle collision rule \eqref{e:sticky} combined with Proposition \ref{prop:psii} yields
	\begin{equation}
		\psi_i(t) = \frac{\sum_{j\in J_i(t)} m_j \psi_j(s)}{\sum_{j\in J_i(t)} m_j},
		\qquad 0\le s\le t.
	\end{equation}

	\subsection{The discretized scalar balance law}\label{sec:discrete}
	We now show that solutions of the system \eqref{eq:ABS}, equipped with the collision rules \eqref{e:collconsmom} and \eqref{e:sticky}, naturally give rise to weak solutions of the discretized scalar balance law \eqref{e:SBdiscreteintro} which verify the Rankine-Hugoniot condition \eqref{e:RH}.  The statement holds in both attractive ($\kappa<0$) and repulsive ($\kappa>0$) cases.

	For the convenience of the reader, we recall the discretized version of the scalar balance law \eqref{eq:balancelaw}, equipped with discretized initial data $M_N^0$ and flux $\A_N$:
	\begin{equation}
		\label{eq:MN}
		\pa_t M_N + \pa_x(\A_N(M_N,t)) = (\phi\ast M_N) \pa_x M_N, \qquad
		M_N(x,0)=M_N^0(x).
	\end{equation}
	We assume that $M_N^0$ is piecewise constant, of the form
	\begin{equation}
		\label{eq:MN0}
		M_N^0(x) = -\frac12 + \sum_{i=1}^{N} m_i H(x - x_i^0),
	\end{equation}
	where $H$ denotes the right-continuous Heaviside function, with $H(0) =1$. Observe that
	\[
	\rho_N^0(x) = \frac{\dd}{\dd x} M_N^0(x) = \sum_{i=1}^{N} m_i \delta_0( x-x_i^0).
	\]
	Hence, $M^0_N$ represents the $N$-particle configuration $(m_i,x_i^0)_{i=1}^N$. 
	
	The initial velocities $(v_i^0)_{i=1}^N$ are encoded in the flux $\A_N$, defined as follows.
	Let $A_N$ be a continuous and piecewise linear function, with breakpoints only at $(\theta_i)_{i=1}^{N-1}$:
	\begin{equation}\label{eq:AN}
		A_N:[-\tfrac12,\tfrac12]\to~\R,\quad
		\text{$A_N$ is linear in each interval $[\theta_{i-1}, \theta_i]$, for any
			$i=1,\ldots,N$.}
	\end{equation}
	For simplicity, we set $A_N(-\frac12)=0$. We require the slope of the $i$-th line segment of $A_N$ to be $\psi_i^0$, namely
	\begin{equation}\label{eq:psi0}
		A_N'(m) \triangleq \psi_i^0 = v_i^0 + \sum_{j=1}^{N} m_j \Phi(x_i^0 - x_j^0),\quad \forall\, m\in(\theta_{i-1}, \theta_i).
	\end{equation}
	This completely determines the function $A_N$. Then, according to \eqref{def:At}, the discretized flux $\A_N$ is defined as
	\begin{equation}\label{def:AN}
		\A_N(m,t) \triangleq A_N(m)+\kappa t m^2. 
	\end{equation}
	
	We are interested in the solution which takes the same form as $M_N^0$, that is,
	\begin{equation}
		\label{eq:MNsoln}
		M_N(x,t) = -\frac12+\sum_{i=1}^{N} m_i H(x-x_i(t)).
	\end{equation}
	
	The following theorem shows that $M_N$ is a \emph{weak} solution to the discretized balance law~\eqref{eq:MN}, where $(x_i)_{i=1}^N$ solves the sticky particle dynamics \eqref{eq:ABS}.
	\begin{theorem} \label{thm:discrete}
		Let $(x_i(t),v_i(t))_{i=1}^{N}$ be the sticky particle solution of \eqref{eq:ABS} associated to the initial data $(m_i, x_i^0,v_i^0)_{i=1}^{N}$. 
		Consider the scalar balance law \eqref{eq:MN} with discrete initial data $M_N^0$ and flux $\A_N$, defined in \eqref{eq:MN0} and \eqref{def:AN}, respectively.
		Then, $M_N$ defined in \eqref{eq:MNsoln} is a weak solution of \eqref{eq:MN}. Moreover, we have
		\begin{equation}\label{e:QN}
			A_N\circ M_N (x,t) = \sum_{i=1}^{N} m_i \psi_i(t)H(x-x_i(t)).
		\end{equation}
	\end{theorem}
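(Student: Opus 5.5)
We first prove the identity \eqref{e:QN}, and then use it to reduce the weak formulation of \eqref{eq:MN} to the Rankine-Hugoniot condition \eqref{e:RH} along each particle trajectory.

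\emph{Step 1: the identity \eqref{e:QN}.} Fix $t$ and a point $x$, and let $k$ be the largest index with $x_k(t)\le x$. Then $M_N(x,t)=\theta_k$, and by \eqref{eq:order} the index $k$ is the top of its cluster, $k=k^*(t)$. Since $A_N$ is piecewise linear with slopes $\psi^0_j$ and $A_N(-\frac12)=0$, we have $A_N(\theta_k)=\sum_{j\le k} m_j\psi_j^0$. The clusters at time $t$ with indices up to $k$ partition $\{1,\dots,k\}$ into full sets $J_\ell(t)$. On each such set, \eqref{eq:psiit} with $s=0$ gives $\sum_{j\in J_\ell(t)} m_j\psi_j^0=\sum_{j\in J_\ell(t)} m_j\psi_j(t)$. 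Summing over the clusters yields $A_N(\theta_k)=\sum_{j\le k}m_j\psi_j(t)$, which is the right-hand side of \eqref{e:QN}.

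\emph{Step 2: reduction to jump conditions.} $M_N$ is piecewise constant, with finitely many discontinuity curves $x=x_i(t)$ that are Lipschitz in time. These curves are $C^1$ between the finitely many collision times, and they merge at collision times. A piecewise constant function is a weak solution of \eqref{eq:MN} if and only if, along each discontinuity curve at a.e.\ time, the Rankine-Hugoniot relation \eqref{e:RH} holds. Here the nonconservative product on the right-hand side is interpreted as in \cite{leslie2023sticky}: $\phi*M_N$ is continuous and is evaluated at the jump location. The proof is a standard integration by parts on each region between consecutive curves. The finitely many collision times form a null set and contribute nothing, because $x_i$ is continuous.

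\emph{Step 3: verifying \eqref{e:RH}.} Consider a cluster $J_i(t)$ at a noncollision time, with $M_\ell=\theta_{i_*-1}$ and $M_r=\theta_{i^*}$. We must show
\[
v_i+\phi*M_N(x_i)=\frac{A_N(\theta_{i^*})-A_N(\theta_{i_*-1})}{\theta_{i^*}-\theta_{i_*-1}}+\kappa t(\theta_{i^*}+\theta_{i_*-1}).
\]
By Step 1, the first quotient equals the mass-weighted average of $\psi_j(t)$ over $J_i(t)$, and all these $\psi_j(t)$ are equal to $\psi_i(t)$ by the averaging identity following Proposition~\ref{prop:psii}. By \eqref{def:psiit}, $\psi_i(t)=v_i-\kappa t(\theta_{i_*-1}+\theta_{i^*})+\sum_j m_j\Phi(x_i-x_j)$. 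The remaining task is to identify $\sum_j m_j\Phi(x_i-x_j)=\Phi*\rho_N(x_i)$ with $\phi*M_N(x_i)$. This follows from the integration-by-parts remark after \eqref{def:psi}, which applies because $\Phi$ is continuous and $\Phi(0)=0$. The two Poisson terms then cancel, and \eqref{e:RH} holds.

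The main obstacle is the justification in Step 2. The merging of curves at collision times, and the meaning of $(\phi*M_N)\pa_x M_N$ as a measure at the jumps, need care. I would handle both by integrating over time intervals between collisions and using the continuity of $\phi*M_N$, following \cite{leslie2023sticky}.
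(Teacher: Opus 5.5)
Your proposal is correct and follows essentially the same route as the paper. Both arguments reduce the weak formulation to the Rankine--Hugoniot condition \eqref{e:RH} on each cluster, identify $[[\A_N(M_N,t)]]/[[M_N]]$ with $\psi_i(t)+\kappa t(\theta_{i_*(t)-1}+\theta_{i^*(t)})$ via \eqref{eq:psiit} and the cluster-averaging identity, and conclude using \eqref{def:psiit} together with $\Phi*\rho_N=\phi*M_N$. The only difference is the order: you prove \eqref{e:QN} first and read off the jump of $A_N$ from it, whereas the paper integrates $A_N'$ across the jump directly and proves \eqref{e:QN} afterwards (and your choice of the largest $k$ with $x_k(t)\le x$ correctly matches the right-continuous Heaviside convention at $x=x_k(t)$).
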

	
	\begin{proof}
		Since $M_N(\cdot, t)$ in \eqref{eq:MNsoln} is piecewise constant, it suffices to check that the shock discontinuities along the curves $C_i = \{(x_i(t), t):t\ge 0\}$ satisfy the Rankine-Hugoniot condition~\eqref{e:RH} with the shock speed $\s_i(t)=v_i(t)$.
		
		Fix a point $(x_i(t), t)$ on $C_i$. By definition \eqref{eq:MNsoln}, we get 
		\[
		M_N(x_i(t)-,t) = \theta_{i_*(t)-1},\quad M_N(x_i(t)+,t)=M_N(x_i(t),t)=\theta_{i^*(t)}.
		\]
		We denote the jump of a function $f$ across $C_i$ by $[[f]] = f(x_i(t)+) - f(x_i(t)-)$. Thus
		\[
		[[M_N(\cdot, t)]] = \theta_{i^*(t)}-\theta_{i_*(t)-1}=\sum_{j\in J_i(t)} m_j,
		\]
		and from \eqref{eq:psi0}, \eqref{def:AN}, we have
		\begin{align*}
			[[ \A_N ( M_N(\cdot, t),t)]] & = 
			\int_{\theta_{i_*(t)-1}}^{\theta_{i^*(t)}} A_N'(m)\dd m + \kappa t \big(\theta_{i^*(t)}^2-\theta_{i_*(t)-1}^2\big)\\ 
			& = \sum_{j\in J_i(t)} m_j \psi_j^0 + \kappa t \big(\theta_{i^*(t)}+\theta_{i_*(t)-1}\big) \sum_{j\in J_i(t)} m_j.
		\end{align*}
		We may thus verify the Rankine-Hugoniot condition \eqref{e:RH}:
		\begin{align*}
			\frac{[[\A_N\circ M_N(\cdot, t)]]}{[[M_N(\cdot, t)]]} & = \frac{\sum_{j\in J_i(t)} m_j \psi_j^0}{\sum_{j\in J_i(t)} m_j} + \kappa t \big(\theta_{i^*(t)}+\theta_{i_*(t)-1}\big)\\
			& = \psi_i(t) + \kappa t \big(\theta_{i^*(t)}+\theta_{i_*(t)-1}\big) = v_i(t) + \phi\ast M_N(x_i(t),t), \quad \text{ along }C_i.
		\end{align*}

		Finally, we check \eqref{e:QN}. The equality is trivial when $x<x_1(t)$. For $x\ge x_1(t)$, let $i$ be the largest index such that $x>x_i(t)$.  Then we have $M_N(x,t)=\theta_i$, so that
		\[
		A_N(M_N(x,t))=\sum_{j=1}^i\int_{\theta_{j-1}}^{\theta_j}A_N'(m)\dd m
		=\sum_{j=1}^im_j\psi_j^0.
		\]
		The identity \eqref{eq:psiit} (together with the fact that $i = i^*(t)$, by construction) implies that
		$\sum_{j=1}^im_j\psi_j^0=\sum_{j=1}^i m_j\psi_j(t)$, which finishes the proof.
	\end{proof}

	\begin{remark}
		Our verification of the Rankine-Hugoniot condition holds not only for both $\kappa~<~0$ and $\kappa>0$, but also for a much wider range of collision rules than we have explicitly developed above.  In order for the verifications of this subsection to go through, we only need the conservation \eqref{e:collconsmom} of momentum across collisions and the well-posedness of the discrete dynamics. The only place where we leveraged the stickiness of the trajectories was our implicit use of \eqref{eq:order}, which can be restored if one is willing to relabel the particles at each collision time.  (The same is true of Proposition \ref{prop:psii}, on which we have also relied.)  
		For this reason, it is clear that \eqref{e:RH} cannot alone serve as a selection principle for our system, even if one restricts attention to discrete solutions of the form \eqref{e:MNform}.
	\end{remark}
	
	\subsection{Attractive Poisson force: Sticky particle collisions and entropy solutions}\label{sec:attractive}

	In this section, we consider the EPA system \eqref{eq:EPA} with an attractive Poisson force ($\kappa<0$). We show that the solution constructed in Section~\ref{sec:discrete} for the discretized balance law \eqref{eq:MN}, obtained via the sticky particle dynamics \eqref{eq:ABS}, is the entropy solution of \eqref{eq:MN}.

	\begin{theorem}\label{thm:discreteattractive}
		Let $\kappa<0$. Under the assumptions in Theorem \ref{thm:discrete}, the sticky particle solution $M_N$ is the entropy solution of the discretized balance law \eqref{eq:MN}.	
	\end{theorem}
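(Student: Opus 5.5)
The plan is to verify the three requirements in the definition of an entropy solution. Monotonicity of $M_N(\cdot,t)$ is immediate from \eqref{eq:MNsoln}. The uniform support bound follows from Proposition~\ref{prop:xvbound}. For the entropy inequality we use Remark~\ref{rem:entconds}: for a piecewise constant $M_N$ whose jumps $C_i$ satisfy both the Rankine--Hugoniot condition \eqref{e:RH} and the Oleinik condition \eqref{eq:Oleinik}, every pair $(\eta,q)$ satisfies \eqref{eq:entropy}. Theorem~\ref{thm:discrete} already gives \eqref{e:RH} with $\sigma_i=v_i$, so only \eqref{eq:Oleinik} remains.

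\textbf{Reduction of Oleinik to breakpoints.} Fix a shock $C_i$ at time $t$, with $M_\ell=\theta_{i_*(t)-1}$ and $M_r=\theta_{i^*(t)}$. By the Rankine--Hugoniot computation in Theorem~\ref{thm:discrete},
\[
\sigma+\phi*M_N=\psi_i(t)+\kappa t\big(\theta_{i^*(t)}+\theta_{i_*(t)-1}\big)=:S.
\]
Consider $G(\theta)=\A_N(\theta,t)-\A_N(M_\ell,t)-S(\theta-M_\ell)$. It vanishes at $M_\ell$ and at $M_r$ (by \eqref{e:RH}), and it is concave on each interval $[\theta_{k-1},\theta_k]$, because $A_N$ is linear there and $\kappa t m^2$ is concave when $\kappa<0$. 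A concave function on an interval attains its minimum at an endpoint, so it suffices to check $G(\theta_k)\ge0$ for $k\in J_i(t)$ with $k<i^*(t)$. Set $L=\{i_*(t),\dots,k\}$ and $m_L=\theta_k-\theta_{i_*(t)-1}$. Then
\[
G(\theta_k)=\sum_{j\in L}m_j\big(\psi_j^0-\psi_i(t)\big)-\kappa t\,m_L\big(\theta_{i^*(t)}-\theta_k\big).
\]
The second term is nonnegative since $\kappa<0$. So the claim is the prefix inequality: the mass-weighted average of $\psi^0$ over any left sub-block of a cluster, corrected by $-\kappa t(\theta_{i^*}-\theta_k)\ge 0$, is at least the cluster value $\psi_i(t)$.

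\textbf{Propagation in time.} We argue by induction over the finitely many collision times. Between collisions, $\psi_i$ and the clusters are frozen by Proposition~\ref{prop:psii}(i). The only $t$-dependence of $G(\theta_k)$ is then the term $-\kappa t\,m_L(\theta_{i^*}-\theta_k)$, which is nondecreasing in $t$. Hence any inequality valid just after a collision persists until the next one.

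\textbf{At a collision time $t$.} The cluster $J_i(t)$ is formed from subclusters $C^1,\dots,C^r$ (left to right) at time $t-$. They coincide at $x_i(t)$ and were ordered just before, so their pre-collision velocities satisfy $v^{(1)}(t-)\ge\cdots\ge v^{(r)}(t-)$. Along a prefix these velocities dominate the momentum-conserving average $v_i(t)$ of \eqref{eq:vstick}.

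To convert this into the prefix inequality for $\psi$, we first apply Proposition~\ref{prop:psii}(iii) to the clusters at $t-$, which gives $\sum_{j\in L}m_j\psi_j^0=\sum_{j\in L}m_j\psi_j(t-)$, provided $L$ consists of whole clusters at $t-$. We then expand $\psi_j(t-)$ using \eqref{def:psiit}:
\begin{itemize}
\item[(a)] The $\Phi$-terms are continuous in time, and they coincide for all $j\in J_i(t)$ because all these particles sit at the same point. They therefore cancel against the corresponding term in $\psi_i(t)$.
\item[(b)] The $\kappa t$-terms are computed with Lemma~\ref{l:thetaid} at $s=t-$. This produces $\theta_k^2-\theta_{i_*-1}^2$ plus the nonnegative correction $(\theta_k-\theta_{k_*(t-)-1})(\theta_{k^*(t-)}-\theta_k)$, which enters multiplied by $-\kappa t\ge0$.
\end{itemize}
When $\theta_k$ falls strictly inside a subcluster $C^p$, we split $L$ into the whole subclusters $C^1,\dots,C^{p-1}$ plus a prefix of $C^p$. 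The whole subclusters are handled by the velocity ordering above. The prefix of $C^p$ is handled by the induction hypothesis, namely the Oleinik inequality for $C^p$ at time $t-$.

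After the cancellations, $G(\theta_k)\ge0$ becomes the statement that a prefix-average of pre-collision velocities is at least the post-collision velocity, plus nonnegative $\kappa$-terms. This holds by the ordering of the incoming subclusters. At $t=0$ (no prior clusters) the inequality reduces to exactly this velocity ordering, encoded in the convention $v_i(0-)=v_i^0$.

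\textbf{Main obstacle.} The delicate step is the bookkeeping at collision times when $\theta_k$ lies inside a pre-existing subcluster. One must check that the term inherited from the induction hypothesis for $C^p$ combines exactly with the correction term from Lemma~\ref{l:thetaid}, so that no negative remainder of size $|\kappa| t$ survives. This is where $\kappa<0$ is used essentially; for $\kappa>0$ the corresponding correction term would have the wrong sign, consistent with the failure of the argument in the repulsive regime.
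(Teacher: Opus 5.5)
Your proposal is correct and is essentially the paper's proof: the same concavity reduction to the breakpoints $\theta_k$, followed by the barycentric prefix inequality of Lemma~\ref{lem:barycentric}, which the paper propagates by showing that $f(s)=\sum_{j=i_*(t)}^k m_j\psi_j(s)+\kappa s(\theta_k-\theta_{k_*(s)-1})(\theta_{k^*(s)}-\theta_k)$ is nonincreasing on $[0,t]$ --- your induction over collision times is that telescoping unrolled, with your inductive hypothesis on the prefix of $C^p$ plus Proposition~\ref{prop:psii}(iii) playing the role of $f(t-)\le f(0)$, and your velocity-ordering step being the jump estimate $f(t)\le f(t-)$. One small correction to your closing remark: at a collision the inherited term and the Lemma~\ref{l:thetaid} correction cancel identically for either sign of $\kappa$, so $\kappa<0$ is actually used only in the concavity reduction and in the monotonicity between collisions.
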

	\begin{proof}
		To show $M_N$ is an entropy solution, we need to verify the Oleinik entropy condition \eqref{eq:Oleinik}, that is,
		\begin{align*}
			v_i(t) + \phi\ast M_N(x_i(t),t)  \le \frac{\A_N(\theta,t) - \A_N(\theta_{i_*(t)-1},t)}{\theta - \theta_{i_*(t)-1}} = \frac{A_N(\theta) - A_N(\theta_{i_*(t)-1})}{\theta - \theta_{i_*(t)-1}} + \kappa t (\theta+\theta_{i_*(t)-1}),
		\end{align*}
		for any $\theta\in (\theta_{i_*(t)-1}, \theta_{i^*(t)})$.
		For fixed $k\in J_i(t)$ and $\theta\in (\theta_{k-1}, \theta_k]$, we have (using \eqref{eq:AN}-\eqref{eq:psi0}) that
		\[
		\frac{A_N(\theta) - A_N(\theta_{i_*(t)-1})}{\theta - \theta_{i_*(t)-1}} = \frac{\sum_{j=i_*(t)}^{k-1} m_j\psi_j^0 + (\theta-\theta_{k-1})\psi_k^0}{\theta - \theta_{i_*(t)-1}}.
		\]
		On the other hand, by definition \eqref{def:psiit} we know
		\[
		v_i(t) + \phi\ast M_N(x_i(t),t) = \psi_i(t) + \kappa t(\theta_{i_*(t)-1}+\theta_{i^*(t)}).
		\]
		Therefore, it remains to verify that
		\begin{equation}\label{eq:verify}
			\psi_i(t) \leq \frac{\sum_{j=i_*(t)}^{k-1} m_j\psi_j^0 + (\theta-\theta_{k-1})\psi_k^0}{\theta - \theta_{i_*(t)-1}} - \kappa t (\theta_{i^*(t)}-\theta),
			\qquad k\in J_i(t),\, \theta\in(\theta_{k-1}, \theta_k],
		\end{equation}
		or equivalently
		\begin{equation}\label{eq:verify3}
			g(\theta)\triangleq\psi_i(t)(\theta - \theta_{i_*(t)-1}) + \kappa t (\theta_{i^*(t)}-\theta)(\theta - \theta_{i_*(t)-1}) - (\theta-\theta_{k-1})\psi_k^0  - \sum_{j=i_*(t)}^{k-1} m_j\psi_j^0 \leq 0.
		\end{equation}
		It is easy to check that $g$ is continuous in $(-\frac12,\frac12]$, and since $\kappa<0$ we have
		\[
		g''(\theta) = -2\kappa t >0,\qquad k\in J_i(t),\, \theta\in(\theta_{k-1}, \theta_k].
		\]
		Therefore, $g$ has to attain its maximum at the end points, namely
		\[
		g(\theta)\leq \max\{g(\theta_{k-1}),g(\theta_k)\}.
		\]
		Hence, it suffices to show that \eqref{eq:verify3} (or \eqref{eq:verify}) holds at the endpoints $\theta=\theta_k$ for $k\in J_i(t)$, which is a direct consequence of the following Lemma~\ref{lem:barycentric}.
	\end{proof}

	\begin{lemma}\label{lem:barycentric}
		Given $t\ge 0$ and $k\in \{i_*(t), \ldots, i^*(t)\}$, we have
		\begin{equation}
			\label{eq:barycentric}
			\psi_i(t) \leq \frac{\sum_{j=i_*(t)}^k m_j \psi_j^0}{\sum_{j=i_*(t)}^k m_j} - \kappa t\big(\theta_{i^*(t)} - \theta_k\big) .
		\end{equation}  
	\end{lemma}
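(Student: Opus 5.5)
The plan is to turn \eqref{eq:barycentric} into a monotonicity statement for one scalar quantity attached to the ``left part'' of the cluster. Fix $t$, $i$, and $k\in\{i_*(t),\ldots,i^*(t)\}$, and set $L=\{i_*(t),\ldots,k\}$ and $m_L=\sum_{j\in L}m_j=\theta_k-\theta_{i_*(t)-1}$. By the sticky rule \eqref{e:sticky}, for every $s\in[0,t]$ the set $L$ is a union of whole clusters $J_\ell(s)$, except possibly for the cluster $J_k(s)$ that contains $k$, of which $L$ keeps only the left portion. Lemma~\ref{l:thetaid} then gives
\[
\sum_{j\in L} m_j(\theta_{j^*(s)}+\theta_{j_*(s)-1}) = \theta_k^2-\theta_{i_*(t)-1}^2 + c(s),\qquad c(s)\triangleq(\theta_k-\theta_{k_*(s)-1})(\theta_{k^*(s)}-\theta_k)\ge 0 .
\]
Here $c(s)$ is piecewise constant in $s$ and satisfies $c(t)=m_L(\theta_{i^*(t)}-\theta_k)$. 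I would define
\[
F(s)\triangleq \sum_{j\in L} m_j\psi_j(s)+\kappa s\, c(s)
= \sum_{j\in L} m_j v_j(s) - \kappa s(\theta_k^2-\theta_{i_*(t)-1}^2) + \sum_{j\in L}\sum_{l=1}^N m_jm_l\Phi(x_j(s)-x_l(s)),
\]
where the second expression follows from \eqref{def:psiit} and the identity above. At $s=t$ every $j\in L$ has $\psi_j(t)=\psi_i(t)$, so $F(t)=m_L\psi_i(t)+\kappa t\,m_L(\theta_{i^*(t)}-\theta_k)$. At $s=0$ (with the pre-collision convention $v_j(0-)=v_j^0$, i.e. before any time-zero collision) we have $F=\sum_{j\in L}m_j\psi_j^0$. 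Hence \eqref{eq:barycentric} is exactly $F(t)\le F(0-)$ divided by $m_L$, and it suffices to show that $F$ is nonincreasing on $[0,t]$, including its jumps.

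\emph{Noncollisional times.} Differentiate the second expression for $F$ using \eqref{e:vi2}. The alignment contributions $\sum_{j\in L}\sum_l m_jm_l\phi(x_l-x_j)(v_l-v_j)$ cancel exactly against the time derivative of the $\Phi$-double sum, because $\Phi'=\phi$ and $\phi$ is even. The Poisson contribution is $\kappa\sum_{j\in L}m_j(\theta_{j_*(s)-1}+\theta_{j^*(s)})=\kappa(\theta_k^2-\theta_{i_*(t)-1}^2+c(s))$. Its first part cancels with the explicit $-\kappa(\theta_k^2-\theta_{i_*(t)-1}^2)$, leaving $F'(s)=\kappa c(s)\le 0$ because $\kappa<0$. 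This is the only place the attractive sign is used.

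\emph{Collision times.} This is the step I expect to be the main obstacle. The $\Phi$-sum is continuous and the $\kappa s(\cdots)$ term is continuous, so only $\sum_{j\in L}m_jv_j$ can jump. Clusters contained entirely in $L$ conserve momentum by \eqref{e:collconsmom}, so they contribute no jump. For the cluster $J_k(s)$ straddling the boundary of $L$, I would argue as follows. Just before $s$ it consists of sub-clusters at positions $y_1<y_2<\cdots$, each with a common velocity, and all of them meet at time $s$. Since $y_b-y_a>0$ before $s$ and $=0$ at $s$, the velocities are ordered: $v_{(1)}(s-)\ge v_{(2)}(s-)\ge\cdots$ from left to right. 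After the collision, all particles of $J_k(s)$ move with the mass-weighted average \eqref{eq:vstick}. The portion of $J_k(s)$ in $L$ is a leftmost initial segment in mass (possibly cutting a sub-cluster, but that sub-cluster has a single velocity), so the average velocity of that segment is at least the average over the whole cluster. Hence $\sum_{j\in L}m_jv_j$ can only decrease across the collision, and $F(s)\le F(s-)$. The same reasoning handles a collision at $s=0$, relating $F(0)$ to $F(0-)=\sum_{j\in L}m_j\psi_j^0$.

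Combining the two cases over the finitely many collision times gives $F(t)\le F(0-)$, which proves \eqref{eq:barycentric}. The point to check carefully is the velocity ordering of the merging sub-clusters when several merge simultaneously. The elementary inequality needed there is that a nonincreasing sequence has its prefix averages at least its total average.
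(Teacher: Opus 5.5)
Away from $s=0$ your argument is the paper's proof. Your $F$ coincides with the paper's $f$, and you obtain $F'(s)=\kappa c(s)\le 0$ between collisions; the paper reads this off directly from \eqref{eq:psii0}, while you re-derive the alignment cancellation. At collision times $s>0$ you use the paper's argument: incoming velocities are ordered left to right, and prefix averages of a nonincreasing sequence dominate its total average. Your second expression for $F$ is correct as written. The paper's displayed version carries a spurious factor $\kappa s$ on the $\Phi$ double sum, which is harmless because that term is continuous.

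The step that fails is your claim that ``the same reasoning handles a collision at $s=0$.'' Your velocity ordering comes from sub-clusters sitting at distinct positions just before $s$ and meeting at $s$. At $s=0$ there is no `before'. Particles sharing an initial position may carry arbitrary velocities $v_j^0=v_j(0-)$, and \eqref{e:sticky} glues them together anyway. Then $F$ can jump up at $s=0$, and the lemma itself is false.

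Counterexample: take $N=2$, $m_1=m_2=\tfrac12$, $x_1^0=x_2^0=0$, $v_1^0=-1$, $v_2^0=1$, $\phi\equiv0$, $\kappa<0$. The pair moves as one particle with $v\equiv 0$, since $\theta_0+\theta_2=0$, so $\psi_1(t)=0$. For $k=1$, however, the right side of \eqref{eq:barycentric} equals $-1-\kappa t/2=-1+|\kappa|t/2$. This is negative for $t<2/|\kappa|$, and at $t=0$ the claim reads $0\le -1$.

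At $t=0$ the lemma is precisely a condition on the initial data. Within every initial cluster, each left segment must have mass-averaged initial velocity at least the cluster average. This holds, for example, if the $x_i^0$ are distinct or if $v^0$ is nonincreasing within each initial cluster. So this hypothesis is necessary, and it is exactly what gives $F(0)\le F(0-)$; with it, your argument is complete.

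The paper's proof has the same blind spot. By identifying the right-hand side of \eqref{e:baryequiv} with $f(0)$, it tacitly assumes $\psi_j(0)=\psi_j^0$, i.e., that there is no nontrivial collision at $t=0$. Your write-up should state the hypothesis explicitly rather than claim the time-zero case follows from the same reasoning.
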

	
	\begin{proof} 
		Let us write \eqref{eq:barycentric} in the form analogous to \eqref{eq:verify3}:
		\begin{equation} 
			\label{e:baryequiv}
			\sum_{j=i_*(t)}^k m_j \psi_j(t) + \kappa t (\theta_k - \theta_{i_*(t)-1})(\theta_{i^*(t)} - \theta_k) \le \sum_{j=i_*(t)}^k m_j \psi_j^0,
		\end{equation} 
		where we have used the fact that $\psi_j(t) = \psi_i(t)$ for $j=i_*(t), \ldots, k$.
		Denote 
		\[
		f(s)\triangleq \sum_{j=i_*(t)}^k m_j \psi_j(s) + \kappa s (\theta_k - \theta_{k_*(s)-1})(\theta_{k^*(s)}-\theta_k).
		\]
		Then, noting that $k_*(t) = i_*(t)$, $k^*(t) = i^*(t)$, we see that the left side of \eqref{e:baryequiv} is equal to $f(t)$, while the right side is $f(0)$. Consequently, in order to prove \eqref{e:baryequiv}, it suffices to show that $f$ is nonincreasing on $[0,t]$. 
		
		If $s$ is not a collision time, we use \eqref{eq:psii0} and $\kappa<0$ to verify that
		\[f'(s)=\kappa(\theta_k - \theta_{k_*(s)-1})(\theta_{k^*(s)}-\theta_k)\leq0.\]
		It remains to show that $f(s)\le f(s-)$ when $s$ is a collision time.
		Using the identity \eqref{e:thetaid} and the definition of $\psi_j$, we see that 
		\begin{align*}
			f(s) & = \sum_{j=i_*(t)}^k m_j \Big(\psi_j(s) + \kappa s(\theta_{j^*(s)} + \theta_{j_*(s)-1})\Big) - \kappa s \big( \theta_k^2 - \theta_{i_*(t) - 1}^2 \big) \\
			& = \sum_{j=i_*(t)}^k m_j v_j(s) + \kappa s \sum_{j=i_*(t)}^k \sum_{\ell=1}^N m_j m_\ell \Phi(x_j(s) - x_\ell(s)) - \kappa s \big( \theta_k^2 - \theta_{i_*(t) - 1}^2 \big).
		\end{align*}
		Since the last two terms are continuous with respect to $s$, we have 
		\begin{align*} 
			f(s) - f(s-) 
			& = \sum_{j=i_*(t)}^k m_j v_j(s) - \sum_{j=i_*(t)}^k m_j v_j(s-)
			= \sum_{j=k_*(s)}^k m_j v_j(s) - \sum_{j=k_*(s)}^k m_j v_j(s-)
			\\
			& = (\theta_k - \theta_{k_*(s) - 1}) \bigg( v_k(s) - \frac{\sum_{j=k_*(s)}^k m_j v_j(s-)}{\sum_{j=k_*(s)}^k m_j}\bigg) \le 0,
		\end{align*} 
		where in the last inequality, we have used the sticky particle collision property:
		\[
		v_{k_*(s)}(s-) \ge v_{k_*(s) + 1}(s-) \ge \cdots \ge v_{k^*(s)-1}(s-) \ge v_{k^*(s)}(s-).
		\] 
	\end{proof}

	\section{Attractive regime: existence and approximability}
	\label{sec:gendata}
	
	In this section, we finish the existence part of the proof of Theorem \ref{thm:M} in the attractive case $\kappa<0$, by taking limits of the discrete entropy solutions generated in Section \ref{sec:discrete}.  Theorem~\ref{thm:discreteattractive} shows that all shocks in $M_N$ are admissible. As a consequence, the front-tracking scheme coincides with the sticky particle approximation. We briefly outline this correspondence here; the argument is largely analogous to that in \cite{leslie2023sticky}.
	
	Given initial data $M^0$, we first construct a sequence of piecewise constant approximations $M_N^0$ of the form \eqref{eq:MN0}, namely
	\[
	M_N^0(x) = -\frac12 + \sum_{i=1}^{N} m_{i,N} H(x - x_{i,N}^0).
	\]
	The flux $\A_N$ is given by \eqref{def:AN} where $A_N$ is the piecewise linear function (see \eqref{eq:AN}) such that
	\[
	A_N(\theta_{i,N}) = A(\theta_{i,N}),\quad \forall\,i=0,1,\ldots,N.
	\]
	The approximation can be constructed such that
	\[
	\lim_{N\to\infty}\|M_N^0-M^0\|_{L^1(\R)}=0,\quad\text{and}\quad
	\lim_{N\to\infty}\|A_N-A\|_{L^\infty([-\frac12,\frac12])}=0.
	\]
	Then, according to \eqref{eq:psi0}, we define the initial velocities 
	\[
	v_{i,N}^0 = \psi_{i,N}^0 - \sum_{j=1}^{N} m_{j,N} \Phi(x_{i,N}^0 - x_{j,N}^0)
	= \frac{A(\theta_{i,N})-A(\theta_{i-1,N})}{m_{i,N}} - \sum_{j=1}^{N} m_{j,N} \Phi(x_{i,N}^0 - x_{j,N}^0).
	\]
	The approximated solution $M_N$ is constructed by evolving the sticky particle dynamics \eqref{eq:ABS} with initial data $(m_{i,N},x_{i,N}^0,v_{i,N}^0)_{i=1}^N$.
	
	Fix a time $T>0$. For any $t\in[0,T]$, since $M_N(t)$ is uniformly bounded and nondecreasing, we apply Helly's selection theorem and find a convergent subsequence $M_{N_k}(t)$ in $L^1_{\loc}(\R)$. Using a diagonal argument, we can get a further subsequence, still denoted by $M_{N_k}$, that is convergent for all rational $t\in[0,T]$ in $L^1_{\loc}(\R)$.  We provisionally denote the limit by $M(t)$.
	
	Observe the uniform-in-$N$ bound $\{x_{i,N}^0\}_{i=1}^N\subset [-R^0, R^0]$.  Combining this with the bound $|\psi_{i,N}^0|\le |A_N|_{\Lip} \le |A|_{\Lip}$, the monotonicity of $\Phi$, and the fact that the total mass of all agents is $1$, we obtain 
	\[
	|v_{i,N}^0| \le |\psi_{i,N}^0| + \sum_{j=1}^N m_{j,N} |\Phi(x_{i,N}^0 - x_{j,N}^0)| \le |A|_{\Lip}  + \Phi(2R^0) \triangleq V^0,\quad \forall\, i=1,\ldots,N.
	\]
	We apply Proposition \ref{prop:xvbound} and obtain 
	\[
	\{x_{i,N}(t)\}_{i=1}^N\subset [-R(T),R(T)],\quad R(T) \triangleq R^0 + V^0T +\tfrac12|\kappa| T^2.
	\]
	It follows that $M_N(\pm x,t)=\pm\tfrac12$ for all $x>R(T)$ and $t\in[0,T]$. Consequently, we have $M_{N_k}(t)-M(t)\to 0$ in $L^1(\R)$ for all rational times $t\in \mathbb{Q}_+$.
	
	The convergence result can be extended to irrational times thanks to the time regularity estimate
	\begin{equation}\label{eq:timereg}
		\int_\R |M_N(x,t) - M_N(x,s)|\dd x \le \sum_{i=1}^Nm_{i,N}|x_{i,N}(t)-x_{i,N}(s)| \le V(T)(t-s).
	\end{equation}
	According to Proposition \ref{prop:xvbound}, $V(T) = |A|_{\Lip} +  \Phi(2R^0) + |\kappa| T$, which is independent of $N$.
	
	We conclude with 
	\[
	M_{N_k}-M\to 0\quad \text{in}\,\, C([0,T];L^1(\R)).
	\]
	Moreover, from \eqref{eq:timereg} we have the uniform bound $\|\pa_tM_{N_k}(\cdot,t)\|_{\mathcal{M}}\leq V(T)$ for any $t\in[0,T]$. Hence, extracting a further subsequence, still denoted by $M_{N_k}$, we obtain the weak-$*$ convergence
	\[\pa_tM_{N_k}(\cdot,t)\stackrel{*}{\rightharpoonup}  \pa_tM(\cdot,t)\quad \text{in}\,\, \mathcal{M}(\R).\] 
	This also allows us to conclude that $M\in BV(\R\times[0,T])$.
	
	Finally, we show that the function $M$ we have constructed above is indeed an entropy solution of \eqref{eq:M}.  We do this by verifying the entropy inequality \eqref{eq:entropydist} for all Kruzkov entropy pairs $(\eta, q)$ in \eqref{eq:Kruzkovpair}.
	
	We know from Theorem \ref{thm:discreteattractive} that $M_N$ is an entropy solution of \eqref{eq:MN}. Thus the entropy inequality \eqref{eq:entropydist} is satisfied for $(\eta, q_N)$:
	\[
	\int_0^{T} \int_\R \Big[\eta(M_N) \pa_t \zeta + q_N(M_N,t)\pa_x \zeta +
	(\phi\ast M_N)\zeta \pa_x(\eta(M_N)) \Big] \dd x \dd t
	\ge 0,
	\]
	where $\eta(m)=|m-\a|$ and $q_N(m,t) = \sgn{m - \alpha}\big(\A_N(m,t) - \A_N(\alpha,t)\big)$.
	Now we pass to the limit. Define $q(m,t) = \sgn{m-\a}(\A(m,t) -
	\A(\alpha,t))$, as in \eqref{eq:Kruzkovpair}. We get
	\begin{align*}
		\|\eta(M_N)-\eta(M)\|_{L^1}\leq
		&\,|\eta|_{\Lip}\|M_N-M\|_{L^1}=\|M_N-M\|_{L^1}\to0,\\
		\|q_N(M_N,t)-q(M,t)\|_{L^1}\leq &\,\|q_N(M_N,t)-q(M_N,t)\|_{L^1}+\|q(M_N,t)-q(M,t)\|_{L^1}\\
		\leq &\, 2R(T)\|A_N-A\|_{L^\infty([-\frac12,\frac12])}+\|\A\|_{\Lip(T)}\|M_N-M\|_{L^1}\to0.
	\end{align*}
	The convergence of the last term follows the same argument as in \cite{leslie2023sticky}. We omit the details here.
	
	We conclude that $M$ is an entropy solution of \eqref{eq:M}, and it can be approximated by $M_N$, generated from the sticky particle dynamics \eqref{eq:ABS}. 
	
	\section{Repulsive regime}\label{sec:repulsive}
	In this section, we consider the EPA system \eqref{eq:EPA} with a repulsive Poisson force, namely $\kappa>0$. In contrast to the attractive case, the entropy solution is \emph{not} consistent with the sticky particle approximation.
	
	\begin{theorem}\label{thm:discreterepulsive}
		Let $\kappa>0$. Under the assumptions in Theorem \ref{thm:discrete}, the sticky particle solution $M_N$ is not the entropy solution of the discretized balance law \eqref{eq:MN}.	
	\end{theorem}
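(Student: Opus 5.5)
The plan is to show that, whatever the data $(m_i,x_i^0,v_i^0)_{i=1}^N$, the Oleinik condition \eqref{eq:Oleinik} eventually fails along some shock curve $C_i$ of $M_N$. Since an entropy solution must satisfy \eqref{eq:Oleinik} on every shock (Section~\ref{sec:balancelaw}), this proves the theorem.

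\textbf{Step 1: reduce to the sign of $g$.} I reuse the reduction in the proof of Theorem~\ref{thm:discreteattractive}. That reduction used only the Rankine--Hugoniot identity of Theorem~\ref{thm:discrete} and the definition \eqref{def:psiit}, not the sign of $\kappa$. It shows that along $C_i$ at time $t$, \eqref{eq:Oleinik} is equivalent to $g(\theta)\le 0$ for all $\theta\in(\theta_{i_*(t)-1},\theta_{i^*(t)})$, with $g$ as in \eqref{eq:verify3}. Write $\theta_-=\theta_{i_*(t)-1}$ and $\theta_+=\theta_{i^*(t)}$. I would rewrite $g$ on the whole cluster interval as
\[
g(\theta)=\kappa t(\theta_+-\theta)(\theta-\theta_-)+h_t(\theta),\qquad h_t(\theta)\triangleq\psi_i(t)(\theta-\theta_-)-\big(A_N(\theta)-A_N(\theta_-)\big).
\]
Here $h_t$ is continuous and piecewise linear, and its slopes $\psi_i(t)-\psi_k^0$ are bounded by $2\max_j|\psi_j^0|$, because $\psi_i(t)$ is a mass-weighted average of the $\psi_j^0$. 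It also satisfies $h_t(\theta_\pm)=0$, since $m$-averaging gives $\sum_{j\in J_i(t)}m_j\psi_j^0=(\theta_+-\theta_-)\psi_i(t)$.

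\textbf{Step 2: freeze the clusters at large times.} By the sticky rule \eqref{e:sticky}, at most $N-1$ collisions occur, so there is a last collision time $T_*<\infty$. For $t>T_*$ the sets $J_i(t)$ are constant, and so are $\psi_i(t)$ by Proposition~\ref{prop:psii}. Hence $h_t\equiv h$ is independent of $t$. Fix any cluster, for instance the one containing agent $1$. Its mass $\theta_+-\theta_->0$, so the interval $(\theta_-,\theta_+)$ is nonempty.

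\textbf{Step 3: the quadratic term dominates.} Take the midpoint $\bar\theta=\frac12(\theta_-+\theta_+)$. Then
\[
g(\bar\theta)\ \ge\ \tfrac14\kappa t(\theta_+-\theta_-)^2-\|h\|_{L^\infty}.
\]
Since $\kappa>0$ and $h$ does not depend on $t$ for $t>T_*$, the right side is positive once $t$ is large enough. For such $t$ the Oleinik inequality fails at $\theta=\bar\theta$ along that shock. Therefore $M_N$ violates \eqref{eq:entropy} for a suitable Kruzkov pair, e.g.\ the pair $\eta(m)=(m-\bar\theta)H(m-\bar\theta)$ used to derive \eqref{eq:Oleinik}, on any time interval $[0,T]$ with $T$ large. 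So $M_N$ is not the entropy solution.

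\textbf{Main obstacle.} The delicate point is uniformity over all data. For small $t$ the sign of $g$ depends on $h_t$, which can be strictly negative inside the interval; for instance, a cluster formed at $t=0$ by a fast agent behind a slow one behaves exactly as in the attractive case. So no universal short-time violation is available. Going to times after the last collision sidesteps this. The only care needed is to check that the reduction of Step~1 is valid for every $\theta$ in $(\theta_-,\theta_+)$, including at the interior breakpoints $\theta_k$, and that \eqref{eq:Oleinik} is indeed a necessary consequence of \eqref{eq:entropy} for piecewise constant $M_N$. That necessity follows from the shock derivation recalled in Section~\ref{sec:balancelaw}.
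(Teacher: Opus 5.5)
Your proof is correct, but it takes a different route from the paper's.

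The paper argues at short times. For $t>0$ before the first collision every cluster is a singleton $J_i(t)=\{i\}$. Then $A_N$ is linear with slope $\psi_i^0=\psi_i(t)$ on $(\theta_{i-1},\theta_i)$, so your $h_t$ vanishes identically there, and \eqref{eq:verify} reduces to $\psi_i^0\le\psi_i^0-\kappa t(\theta_i-\theta)$, which is false for every $t>0$. That argument is shorter, and in the generic case it gives a stronger conclusion. The Oleinik condition fails instantaneously, so $M_N$ is not entropic on any interval $[0,T]$, $T>0$. This is the instant break-up of atoms illustrated by the rarefaction example that follows it.

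The paper's argument tacitly needs an agent that is still isolated at small positive times. If, e.g., all $N\ge 2$ agents start at the same point (allowed, since only $x_1^0\le\cdots\le x_N^0$ is assumed), no such agent exists. Moreover, as you correctly observe, $h_t$ can then be strictly negative inside the cluster interval, so the sticky solution can be entropic on a short time interval. Your argument lets the quadratic term $\kappa t(\theta_+-\theta)(\theta-\theta_-)$ dominate the time-independent $h$ after the last collision. It therefore covers every admissible datum. The price is that it only shows failure on $[0,T]$ for $T$ large, which is all the statement requires given the definition of entropy solutions on $[0,\infty)$.

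Two small remarks on your write-up.

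First, you do not need the last collision time. Since $\psi_i(t)$ is always a cluster average of the $\psi_j^0$, your slope bound already gives
$g(\bar\theta)\ge\frac14(\theta_+-\theta_-)\big[\kappa t(\theta_+-\theta_-)-4\max_j|\psi_j^0|\big]$.
This is positive for every cluster once $t>4\max_j|\psi_j^0|/(\kappa\min_j m_j)$, which gives an explicit time.

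Second, $\eta(m)=(m-\bar\theta)H(m-\bar\theta)$ is not itself a Kruzkov entropy in the sense of \eqref{eq:Kruzkovpair}. However, it equals $\frac12\big(|m-\bar\theta|+m-\bar\theta\big)$, and the linear part contributes an equality because $M_N$ is a weak solution. So the Kruzkov pair with $\alpha=\bar\theta$ is violated as well.
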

	\begin{proof}
		We argue that the Oleinik entropy condition \eqref{eq:verify} fails to be satisfied.
		
		Take a small time $t>0$ such that no collision occurs yet, namely $J_i(t)=\{i\}$. Then, \eqref{eq:verify} implies
		\[
		\psi_i(t) \leq \frac{(\theta-\theta_{i-1})\psi_i^0}{\theta - \theta_{i-1}} - \kappa t (\theta_{i}-\theta) = \psi_i^0 - \kappa t (\theta_{i}-\theta) < \psi_i^0,\quad \forall\, \theta\in(\theta_{i-1}, \theta_i).
		\]	
		However, from \eqref{eq:psii0} we know $\psi_i(t) = \psi_i^0$. This leads to a contradiction. Therefore, $M_N$ is not an entropy solution of \eqref{eq:MN}.
	\end{proof}
	
	Theorem~\ref{thm:discreterepulsive} shows that the entropy solution cannot tolerate jump discontinuities. To illustrate this phenomenon, consider the initial condition
	\[
	\rho^0(x) = \delta_0(x),\quad\text{and correspondingly,}\quad M^0(x) = -\frac12+H(x),	
	\]
	which represents a single point mass located at $x=0$. Assume that the initial velocity is $u^0(x)=0$, and that there is no alignment interaction, i.e. $\phi\equiv0$. In this case, the flux satisfies $\A(m,t) = \kappa t m^2$. 
	
	For this repulsive Euler-Poisson equation with atomic initial data, the associated scalar balance law \eqref{eq:M} reduces to
	\[
	\pa_tM+ \pa_x(\kappa t M^2)=0,\quad M^0(x) = 
	\begin{cases}
		-\frac12 & x<0\\ \frac12 & x\geq0.
	\end{cases}
	\]
	This Riemann problem admits an explicit entropy solution given by
	\[
	M(x,t) =  \begin{cases}
		-\frac12 & x<-\frac{\kappa t^2}{2}\\ \frac{x}{\kappa t^2}& -\frac{\kappa t^2}{2} \leq x< \frac{\kappa t^2}{2}\\ \frac12 & x\geq \frac{\kappa t^2}{2},
	\end{cases}
	\]
	which is a rarefaction wave. Consequently,
	\[
	\rho(x,t) = \pa_xM(x,t) = \frac{1}{\kappa t^2} \mathbf{1}_{(-\frac{\kappa t^2}{2},\frac{\kappa t^2}{2})}(x),
	\]
	so the density is no longer a point mass. Thus, the repulsive interaction breaks the initial atom into infinitesimal pieces that spread out in space.
	
	We also remark that the solution may develop mass concentration (i.e.\ jump discontinuities in $M$) at finite time, even if the initial data are smooth; see, for instance, \cite{bhatnagar2023critical}. However, such concentrations are transient. Since the flux $\A(m,t)=A(m)+\kappa t m^2$ is strictly convex for sufficiently large $t$, all jump discontinuities must eventually disappear by the argument above.
	
	Regarding existence, the front-tracking scheme employs small $\delta$-shocks to approximate rarefaction waves. This procedure applies directly to \eqref{eq:M} and yields an entropy solution, but the resulting approximation no longer coincides with the sticky particle solution.
	
	Whether this entropy solution is physically relevant remains debatable. Other weak solutions exist, including the sticky particle solution \eqref{eq:MNsoln}. Another alternative was introduced in \cite{carrillo2023equivalence}, where the solution $M_N$ remains piecewise constant but an \emph{unsticking} mechanism is permitted. It is unclear whether such solutions enjoy uniqueness or stability properties comparable to those of the entropy solution constructed here. We refer the reader to \cite{carrillo2023equivalence} for further discussion in the context of the Euler-Poisson equations. Many of the arguments therein can be adapted to the EPA system \eqref{eq:EPA}.

\end{document}